\newtheoremstyle{myremark} 
    {7pt}                    
    {7pt}                    
    {}  	                 
    {}                           
    {\bf}       	         
    {.}                          
    {.5em}                       
    {}  
\theoremstyle{plain}
\newtheorem{lemma}{Lemma}[section]
\newtheorem{theorem}[lemma]{Theorem}
\newtheorem{proposition}[lemma]{Proposition}
\newtheorem{conjecture}[lemma]{Conjecture}
\theoremstyle{myremark}
\newtheorem{remark}[lemma]{Remark}
\newtheorem{example}[lemma]{Example}
\newcommand{\nicef}{\mathcal{F}}
\newcommand{\nicem}{\mathcal{M}}
\newcommand{\ind}{\mathrm{Ind}}
\newcommand{\htpyequiv}{\simeq}
\renewcommand{\subset}{\subseteq}
\newcommand{\susp}{\Sigma}
\newcommand{\lk}{\mathrm{lk}}
\newcommand{\mindeg}{\mathrm{mindeg}}
\newcommand{\redhom}{\widetilde{H}}
\newcommand{\betti}{\mathrm{b}}
\newcommand{\tbetti}{\mathbf{b}}
\newcommand{\abetti}{\beta}
\newcommand{\bipa}{\mathrm{Bip}}
\newcommand{\kkk}{\mathbf{k}}
\newcommand{\neibcomplex}{\mathcal{N}}
\newcommand{\Michal}[1]{}
\begin{document}
\title{Extremal problems related to Betti numbers of flag complexes}

\author[Micha{\l} Adamaszek]{Micha{\l} Adamaszek}
\address{Fachbereich Mathematik, Universit\"at Bremen
      \newline Bibliothekstr. 1, 28359 Bremen, Germany}
\email{aszek@mimuw.edu.pl}
\thanks{This research was partially carried out when the author was a member of the Centre for Discrete Mathematics and its Applications (DIMAP) and the Mathematics Institute of the University of Warwick, Coventry, UK. Research supported by EPSRC award EP/D063191/1 and a DFG grant.}


\keywords{Flag complex, Betti numbers, Independence complex, Euler characteristic}
\subjclass[2010]{05E45, 55U10}

\begin{abstract}
We study the problem of maximizing Betti numbers of simplicial complexes. We prove an upper bound of $1.32^n$ for the sum of Betti numbers of any $n$-vertex flag complex and $1.25^n$ for the independence complex of a triangle-free graph. These findings imply upper bounds for the Betti numbers of various related classes of spaces, including the neighbourhood complex of a graph. We also make some related observations.
\end{abstract}
\maketitle

\section{Introduction}
\label{sect:intro}

There is a number of topological and algebraic invariants associated to graphs and simplicial complexes. The starting point for this investigation is the following kind of extremal problem: given a bound on the size of the combinatorial input, how large values can those invariants take?

For example, if a simplicial complex has at most $n$ vertices, then it has at most $2^n$ faces, and trivially its homology groups can have total dimension at most $2^n$. This is asymptotically optimal. The $k$-skeleton of the $n$-simplex is known to be homotopy equivalent to the wedge of ${n \choose k+1}$ spheres, hence for $k\approx n/2$ its homology has dimension approximately $\frac{2^n}{\sqrt{n}}$. This construction is optimal by \cite{BK}.

Considerably better upper bounds can be obtained by considering restricted families of complexes. In this work we concentrate on flag complexes. A simplicial complex $K$ is called \emph{flag} if the set of faces of $K$ coincides with the set of cliques in the $1$-skeleton of $K$, hence $K$ is also called a \emph{clique complex}. Clique complexes appear throughout geometry, topology and combinatorics, some notable examples being order complexes of posets and Vietoris-Rips complexes of discrete metric spaces.

From the point of view of our applications it will be convenient to take the following, equivalent approach to flag complexes. The \emph{independence complex} $\ind(G)$ of a graph $G$ is a simplicial complex whose faces are the independent sets of $G$ (the sets $W\subset V(G)$ for which the induced subgraph $G[W]$ has no edges). Clearly the family of independence complexes of graphs is the same as the family of flag complexes as an independent set in $G$ is a clique in the graph-theoretic complement $\overline{G}$.

We fix once and for all a field $\kkk$. For a finite type topological space $X$ let $\betti(X)=\sum_{i}\dim_\kkk\redhom_i(X;\kkk)$ denote the \emph{total Betti number} of $X$. In Section~\ref{section:proof1} we will define constants 
$$\Theta=4^{1/5}\approx 1.32 \quad\textrm{and}\quad \Gamma\approx 1.25$$
for which we have the next result.

\begin{theorem}
\label{thm:maintheorem1}
For any $n$-vertex graph $G$ we have 
\begin{equation*}
\betti(\ind(G)) \leq \Theta^n.
\end{equation*}
If $G$ is triangle-free then 
\begin{equation*}
\betti(\ind(G)) \leq \Gamma^n.
\end{equation*}
\end{theorem}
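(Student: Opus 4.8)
The plan is to prove both bounds by induction on the number of vertices $n$, using a standard vertex-deletion/link decomposition that is well adapted to independence complexes. For a vertex $v$ of $G$, one has the homotopy-type level statement that $\ind(G)$ is covered by the closed star of $v$ and the subcomplex $\ind(G\setminus v)$, whose intersection is $\ind(G\setminus N[v])$; since the closed star is contractible, the Mayer--Vietoris sequence gives the Betti-number inequality
\begin{equation*}
\betti(\ind(G)) \leq \betti(\ind(G\setminus v)) + \betti(\ind(G\setminus N[v])),
\end{equation*}
where $N[v]$ is the closed neighbourhood of $v$. If $v$ has degree $d$, then $G\setminus v$ has $n-1$ vertices and $G\setminus N[v]$ has $n-1-d$ vertices, so the induction hypothesis bounds the right-hand side by $\Theta^{n-1}+\Theta^{n-1-d}$ (respectively with $\Gamma$). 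The point is to choose $v$ cleverly — typically a vertex of minimum degree, or to exploit low-degree vertices — so that the recursion closes.

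For the general flag-complex bound $\Theta = 4^{1/5}$, the idea is that $\Theta$ is chosen precisely so that $\Theta^5 = 4$, i.e. $1 + \Theta^{-2} \le \Theta$ fails but a degree-$\ge 2$ estimate does not suffice on its own; instead one wants: if $G$ has a vertex of degree $\le 1$ one reduces easily, and otherwise one finds a local configuration on a bounded number of vertices (an edge, a path, or a small induced subgraph on $5$ vertices) on which a sharper case analysis gives the factor $4$ gained per $5$ vertices. Concretely I would look for a vertex $v$ with $\deg v \le 1$ first (then $\betti(\ind(G)) \le \betti(\ind(G\setminus v)) + \betti(\ind(G\setminus N[v]))$ with $|N[v]|\le 2$, and $\Theta^{-1}+\Theta^{-2}\le 1$ must be checked — it is, since $\Theta>1.32$); if the minimum degree is $\ge 2$, pick an edge $uv$ and decompose further, or directly invoke the known bound that the independence complex of a graph with minimum degree $\ge 2$ and $n$ vertices has total Betti number at most $4^{n/5}$ via an analysis of short induced cycles and paths. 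The triangle-free bound $\Gamma$ is handled the same way, but now every neighbourhood $N(v)$ is an independent set, which forces $\ind(G\setminus N[v])$ to have a more favourable structure (one can often split off a cone point or reduce to a graph where an entire independent set of deleted vertices improves the count), yielding the smaller growth rate $\Gamma \approx 1.25$.

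The key steps, in order, would be: (1) establish the Mayer--Vietoris inequality above, together with the base cases (the empty graph and a single vertex, where $\ind(G)$ is contractible or a point, so $\betti = 0$, and small graphs); (2) prove a ``cleanup'' lemma reducing to graphs of minimum degree $\ge 2$ (and, for the flag case, perhaps handling vertices of degree exactly $2$ with a neighbour of small degree by a $2$-step deletion); (3) identify, in a graph of minimum degree $\ge 2$, a small induced subgraph $H$ on at most $5$ vertices (a short cycle $C_3,C_4,C_5$, or a path with specified endpoint degrees) and carry out the case analysis that yields $\betti(\ind(G)) \le 4 \cdot \Theta^{n-5} = \Theta^n$ — using the fact that $\ind$ behaves predictably under removing such configurations (e.g. $\ind(G)$ deformation retracts or its Betti number is bounded via iterated Mayer--Vietoris); (4) repeat with the triangle-free hypothesis, where the absence of triangles restricts which $H$ can occur and improves the recursion to the constant $\Gamma$.

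The main obstacle I expect is step (3): the combinatorial case analysis of small induced subgraphs realizing minimum degree $\ge 2$, and verifying for each one that the resulting multi-step Mayer--Vietoris recursion genuinely closes with the stated constant. In particular one must be careful that after deleting $N[v]$ the remaining graph may drop below minimum degree $2$, so the induction has to be set up to tolerate this (e.g. by proving the bound for \emph{all} graphs but routing through the min-degree-$\ge 2$ sublemma only when advantageous), and one must pin down exactly which $5$-vertex configurations are unavoidable — this is where the specific value $4^{1/5}$, rather than something smaller, gets forced, presumably by an extremal example (such as a disjoint union of $5$-cycles, whose independence complex is a wedge of circles and realizes $\betti = 2^{n/5}$... no — one needs the sharper $4^{n/5}$ example, likely iterated suspensions built from $C_5$'s) showing the constant cannot be improved.
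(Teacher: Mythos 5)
Your starting point is the same as the paper's (the deletion--link inequality $\betti(\ind(G))\leq\betti(\ind(G\setminus v))+\betti(\ind(G\setminus N_G[v]))$ plus induction on $n$ at a minimum-degree vertex), but the mechanism that actually makes the induction close is absent, and it is the whole content of the proof. The paper's key move is not a case analysis of small induced subgraphs. Given a vertex $v$ of minimum degree $d$ with neighbours $v_1,\ldots,v_d$, one applies the inequality $d$ times, to $v_1$, then to $v_2$ inside $G\setminus v_1$, and so on; after all of $v_1,\ldots,v_d$ are deleted the vertex $v$ is isolated, so that final ``deletion'' term vanishes, leaving $\tbetti(G)\leq\sum_{i=0}^{d-1}\tbetti(G_i\setminus N_{G_i}[v_{i+1}])$ where $G_i=G\setminus\{v_1,\ldots,v_i\}$. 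Each summand lives on at most $n-d-1$ vertices (minimum degree $d$), so induction gives $\tbetti(G)\leq d\,\Theta^{n-d-1}\leq\Theta^n$, the constant $\Theta=4^{1/5}$ being exactly $\max_d d^{1/(d+1)}$. In the triangle-free case $v_{i+1}$ is adjacent to none of $v_1,\ldots,v_d$, so it still has degree at least $d$ in $G_i$ and the $i$-th summand has at most $n-d-i-1$ vertices; $\Gamma$ is defined precisely so that $\sum_{i=0}^{d-1}\Gamma^{-(d+i+1)}\leq 1$. Your plan replaces this with an unexecuted case analysis of $C_3$, $C_4$, $C_5$ and paths in minimum-degree-$2$ graphs --- which you yourself flag as the main obstacle --- and at one point you propose to ``directly invoke the known bound'' $4^{n/5}$ for graphs of minimum degree at least $2$; that is essentially the statement being proved, so as written the argument is circular.

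There are also concrete errors in the parts you do verify. Your cleanup step for vertices of degree at most $1$ rests on the claim $\Theta^{-1}+\Theta^{-2}\leq 1$, which is false: this holds only for $\Theta$ at least the golden ratio, while $(\Theta+1)/\Theta^2\approx 1.33$ for $\Theta=4^{1/5}$. So the naive one-step recursion does not close even in the low-degree case, which is exactly why the iterated deletion of all $d$ neighbours (with the vanishing isolated-vertex term) is needed. Finally, the extremal configuration is not built from $5$-cycles or their suspensions: it is the disjoint union of copies of $K_5$, whose independence complex is a join of sets of five isolated points and realizes $\tbetti=4^{n/5}$; this is what forces the value $4^{1/5}$ (it corresponds to the worst case $d=4$ in the recursion), though of course the lower bound plays no role in proving the upper bound itself.
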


Let us make a few comments. The first inequality can also be deduced from more general results of \cite{KozHulls}, although the above formulation seems far from being ``well-known''. For the smaller class of order complexes one can use an even earlier result of \cite{Ziegler}. Theorem~\ref{thm:maintheorem1} provides a benchmark for studying families of graphs $G$ for which $\betti(\ind(G))$ is exponential in the number of vertices $n$. Such graphs have appeared recently in mathematical physics, see for example \cite{EngWitten,HuSch1,HuSch2}. Independence complexes of bipartite and triangle-free graphs have also received some attention \cite{Bar,JonBip}, as well as those of other classes of graphs with forbidden induced subgraphs \cite{Eng2}. It should also be noted that the above are the extremal values, only approached by tailor-made constructions. The behaviour of Betti numbers for random or geometric flag complexes is a lot more tame, see \cite{Kahle1,Goff}.

The second part of Theorem~\ref{thm:maintheorem1} has various consequences listed below. Recall that the \emph{neighbourhood complex} $\neibcomplex(G)$ of a graph $G$ is a simplicial complex whose vertices are the non-isolated vertices of $G$ and whose faces are the vertex subsets which have a common neighbour in $G$. It is a well-known construction with classical applications to the theory of chromatic numbers of graphs \cite{Lovasz}.

\begin{theorem}
\label{thm:maintheorem3}
We have the following upper bounds.
\begin{itemize}
\item[a)] If $K$ is any simplicial complex with $n$ vertices and $m$ maximal faces then
\begin{equation*}
\betti(K)\leq\Gamma^{n+m}.
\end{equation*}
\item[b)] If $G$ is a graph with $n$ vertices then its neighbourhood complex $\neibcomplex(G)$ satisfies
\begin{equation*}
\betti(\neibcomplex(G))\leq(\Gamma^2)^{n}.
\end{equation*}
\item[c)] If $K$ is any simplicial complex with $n$ vertices and $m$ minimal non-faces then
\begin{equation*}
\betti(K)\leq\Gamma^{n+m}.
\end{equation*}
\end{itemize}
\end{theorem}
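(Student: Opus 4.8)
\emph{Overview.} My plan is to prove part~a) first; parts~b) and~c) then follow quickly. For a), let $V$ be the vertex set of $K$ and let $F_1,\dots,F_m$ be the maximal faces of $K$. Form the bipartite graph $H$ on $V\cup\{y_1,\dots,y_m\}$ in which $v\in V$ is adjacent to $y_j$ precisely when $v\notin F_j$. Then $H$ has $n+m$ vertices and is triangle-free (being bipartite), so by the second inequality of Theorem~\ref{thm:maintheorem1} it is enough to show $\betti(\ind(H))=\betti(K)$; in fact I claim $\ind(H)\htpyequiv\susp K$. Decompose $\ind(H)=\Delta(V)\cup L$, where $\Delta(V)$ is the full simplex on $V$ (a face of $\ind(H)$, since $V$ is independent in $H$) and $L$ is the subcomplex generated by all faces of $\ind(H)$ that contain at least one $y_j$; every face either misses all the $y_j$, hence lies in $\Delta(V)$, or contains one, hence lies in $L$. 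A short check shows $\Delta(V)\cap L=\bigcup_j\Delta(F_j)=K$ (using that the $F_j$ cover $V$). Since $\Delta(V)$ is a cone, $\ind(H)$ is the homotopy pushout of $\mathrm{pt}\leftarrow K\to\mathrm{pt}$, i.e.\ is $\susp K$, as soon as $L$ is contractible. So everything reduces to the contractibility of $L$.

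\emph{Contractibility of $L$.} I would prove this by induction on $m$. Delete $y_m$: then $L=\st_L(y_m)\cup(L\setminus y_m)$ with intersection $\lk_L(y_m)$, and $\st_L(y_m)$ is a cone. Set $K_{<m}=\bigcup_{j<m}\Delta(F_j)$ and $K^{(m)}=\bigcup_{j<m}\Delta(F_j\cap F_m)\subseteq\Delta(F_m)$, and write $L(\cdot)$ for the same construction applied to a complex presented by fewer maximal faces. A direct computation identifies $\lk_L(y_m)$ with $L(K^{(m)})$ and the simplex $\Delta(F_m)$ glued along $K^{(m)}$, and $L\setminus y_m$ with $L(K_{<m})$ and $\Delta(F_m)$ glued along the \emph{same} subcomplex $K^{(m)}$. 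By the inductive hypothesis $L(K^{(m)})$ and $L(K_{<m})$ are contractible, so the inclusion $L(K^{(m)})\incl L(K_{<m})$ is a homotopy equivalence, and hence so is $\lk_L(y_m)\incl L\setminus y_m$ by the gluing lemma. Therefore $L\htpyequiv\st_L(y_m)$, which is contractible. The base case $m=1$ is immediate, since then $L$ is itself a simplex.

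\emph{Parts b) and c).} If $G$ has $n$ vertices then $\neibcomplex(G)$ has at most $n$ vertices, and every maximal face of $\neibcomplex(G)$ equals $N_G(v)$ for some $v$: any face is contained in some $N_G(v)$, which is itself a face of $\neibcomplex(G)$, so a maximal face coincides with such an $N_G(v)$. Hence $\neibcomplex(G)$ has at most $n$ maximal faces, and part~a) gives $\betti(\neibcomplex(G))\le\Gamma^{n+n}=(\Gamma^2)^n$. For c), let $K^\vee=\{A\subseteq V:V\setminus A\notin K\}$ be the Alexander dual of $K$ with respect to its $n$-element vertex set. Complementation is a bijection between the minimal non-faces of $K$ and the maximal faces of $K^\vee$, so $K^\vee$ has at most $n$ vertices and exactly $m$ maximal faces, and part~a) yields $\betti(K^\vee)\le\Gamma^{n+m}$. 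Combinatorial Alexander duality over $\kkk$ provides isomorphisms $\redhom_i(K;\kkk)\cong\redhom_{\,n-i-3}(K^\vee;\kkk)$ for all $i$, whence $\betti(K)=\betti(K^\vee)\le\Gamma^{n+m}$.

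\emph{Main obstacle.} The one step that needs genuine work is the chain of identifications of $\lk_L(y_m)$ and $L\setminus y_m$ in the inductive step of part~a) --- equivalently, a direct check that $\ind(H)$ and $\susp K$ obey the same Mayer--Vietoris recursion when one peels off a single maximal face. This, together with care over degenerate cases (for instance $K$ a simplex, where $H$ has only isolated vertices, or vertices of $K$ that lie in a unique maximal face), is where I expect the main difficulty to lie; everything else is bookkeeping.
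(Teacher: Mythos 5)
Your reduction is exactly the paper's: part b) is handled by noting that the maximal faces of $\neibcomplex(G)$ are among the sets $N_G(v)$, part c) by Alexander duality (complements of minimal non-faces are the maximal faces of $K^*$ and $\betti(K)=\betti(K^*)$), and part a) by forming the bipartite graph $\bipa(K)$ on $n+m$ vertices and invoking the triangle-free bound of Theorem~\ref{thm:maintheorem1}. The only genuine difference is that the paper simply cites the equivalence $\ind(\bipa(K))\htpyequiv\susp K$ (Lemma~\ref{fact:bipsim}, due to Barmak and Jonsson), whereas you prove it, and your proof is essentially correct: writing $\ind(\bipa(K))=\Delta(V)\cup L$ with $\Delta(V)\cap L=K$, everything indeed reduces to contractibility of $L$. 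Two remarks on that step. First, in your induction the complex $K^{(m)}$ is presented by the faces $F_j\cap F_m$, which in general are \emph{not} maximal in $K^{(m)}$ (they may be nested, repeated, or empty), so the statement you induct on must be formulated for a complex generated by an arbitrary list of faces $G_1,\dots,G_r$, namely that $L=\{\sigma\sqcup\tau:\sigma\in K',\ \tau\subseteq\{y_j:\sigma\subseteq G_j\}\}$ is contractible; your inductive step and the gluing-lemma comparison of the two pushouts over $K^{(m)}$ work verbatim in that generality, but your phrase ``presented by fewer maximal faces'' does not cover it as written. Second, there is a shortcut that avoids the induction altogether: $L=\bigcup_j\overline{\st}(y_j)$ is the star cluster of the face $\{y_1,\dots,y_m\}$, and since $\ind(\bipa(K))$ is flag, every intersection $\bigcap_{j\in S}\overline{\st}(y_j)$ is a nonempty cone with apex any $y_{j_0}$, $j_0\in S$; the nerve lemma then gives contractibility of $L$ at once --- this is Barmak's original argument. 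Finally, the degenerate cases you flag ($K$ empty, or a cone such as a single simplex, where $\susp K$ needs care or $\bipa(K)$ has isolated vertices) cause no trouble for the inequality itself, since there $\betti(K)\le 1\le\Gamma^{n+m}$ or $\betti=0$ outright.
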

Clearly parts a),c) give useful bounds (better than $2^n$) only when $m$ is sufficiently small, approx. $m\leq 2.1n$.

\medskip
The paper is laid out as follows. Section~\ref{section:prelim} contains the necessary background and notation. In Section~\ref{section:proof1} we give a proof of Theorem~\ref{thm:maintheorem1} based on the inequality~\eqref{eq:bettiinequality}. As an immediate application we use Hochster's formula to derive an upper bound for the sum of algebraic Betti numbers $\abetti(G)$ of flag complexes and edge ideals.

Theorem~\ref{thm:maintheorem3} is proved in Section~\ref{section:proof3} using the correspondence between arbitrary simplicial complexes and independence complexes of bipartite graphs.

In Section~\ref{section:nomissing} we investigate complexes without missing $d$-faces, which are a generalization of flag complexes. We use Alexander duality to show an analogue, albeit much weaker, of Theorem~\ref{thm:maintheorem1} for such complexes. As an aside, we show how the same methods give bounds on the homological dimension in those classes of complexes. 

Unfortunately, of all the upper bounds we present, only the first one in Theorem~\ref{thm:maintheorem1} is known to be tight. In Section~\ref{section:lower} we will construct examples exhibiting the best lower bounds we were able to find. It is likely that some of them are in fact optimal. They are summarized in Table~\ref{table1}.

\begin{table}
\begin{tabular}{l|r|r}
quantity & best construction & upper bound \\
\hline
$\betti(\ind(G))$ & (Ex.\ref{ex:bigBetti}) $\ \ \ 1.320^n$ & (Thm.\ref{thm:maintheorem1}) $\ \ \ 1.320^n$ \\
$\betti(\ind(G))$, $G$ triangle-free & (Ex.\ref{ex:bigBettiBip}) $\ \ \ 1.160^n$ & (Thm.\ref{thm:maintheorem1}) $\ \ \ 1.250^n$ \\
$\betti(\neibcomplex(G))$ & (Ex.\ref{ex:neibcomplex}) $\ \ \ 1.316^n$ & (Thm.\ref{thm:maintheorem3}) $\ \ \ 1.562^n$ \\
$\abetti(G)$ & (Ex.\ref{ex:bigAlgBetti}) $\ \ \ 2.299^n$ & (Prop.\ref{thm:maintheorem2}) $\ \ \ 2.320^n$ \\
$\abetti(G)$, $G$ triangle-free & (Ex.\ref{ex:bigAlgiTriangleFree}) $\ \ \ 2.070^n$ & (Prop.\ref{thm:maintheorem2}) $\ \ \ 2.250^n$ 
\end{tabular}
\vskip0.2cm
\caption{\label{table1} Lower and upper bounds appearing in this paper, with $n$ denoting the number of vertices of $G$.}
\end{table}

\section{Preliminaries}
\label{section:prelim}

We first introduce some notation and prove basic results.

\subsection*{Graphs.}
All graphs are finite, undirected and without multiple edges or loops. If $v\in V(G)$ then $N_G(v)$ is the set of neighbours of $v$ and $N_G[v]=N_G(v)\cup\{v\}$. The degree of $v$ is $\deg_Gv=|N_G(v)|$ and $\mindeg(G)$ is the smallest degree of a vertex of $G$. If $W\subset V(G)$ then $G[W]$ is the induced subgraph of $G$ with vertex set $W$ and $G\setminus W$ is shorthand for the more correct $G[V(G)\setminus W]$.

By $G\sqcup H$ we denote the disjoint union of graphs $G$ and $H$. The notation $\bigsqcup^s G$ or simply $sG$ stands for the disjoint union of $s$ copies of $G$. By $G\oplus H$ we denote the graph obtained from $G\sqcup H$ by adding all edges between $V(G)$ and $V(H)$. In the context of graphs we denote by $\emptyset$ the unique graph with $V(G)=\emptyset$. For $s\geq 1$ let $K_s$ denote the complete graph with $s$ vertices.

\subsection*{Simplicial topology.}
If $K$ and $L$ are simplicial complexes then $K\ast L$ is their join, i.e. the complex with faces of the form $\sigma\sqcup\tau$ for $\sigma\in K$ and $\tau\in L$. If $G$ and $H$ are graphs then 
\begin{equation}\label{eq:joinind}\ind(G\sqcup H)=\ind(G)\ast\ind(H).\end{equation}
By $\susp\, K=S^0\ast K$ we denote the (unreduced) suspension of $K$, where $S^0$ is the $0$-sphere, that is the simplicial complex with two isolated vertices. In the context of topological spaces $\emptyset$ denotes the empty space. We have $\ind(\emptyset)=\emptyset$.

If $\nicef\subset 2^V$ is a family of subsets of some ground set $V$, then the \emph{simplicial complex generated by} $\nicef$ is $\{A~|~A\subset F\ \textrm{for some}\ F\in\nicef \}$. If $K$ is a simplicial complex then a \emph{non-face} of $K$ is a subset of $V(K)$ which is not a face of $K$.

The $i$-th Betti number of a space $X$ is $\betti_i(X)=\dim_\kkk\redhom_i(X;\kkk)$, where $\redhom_i$ denotes the $i$-th reduced homology group. The \emph{total Betti number} is $\betti(X)=\sum_i\betti_i(X)$. We have $\betti_i(\susp\,X)=\betti_{i-1}(X)$, hence $\betti(\susp\,X)=\betti(X)$.

If $K$ is a simplicial complex then $K^*$ is its \emph{Alexander dual}, i.e. the simplicial complex with vertex set $V(K)$ and with faces $\{\sigma\subset V(K)~:~V(K)\setminus\sigma\not\in K\}$. By Alexander duality \cite{BjoTancer} we have $\betti_i(K)=\betti_{n-i-3}(K^*)$ where $n$ is the number of vertices in $K$ \footnote{Alexander duality relates homology of $K$ with the cohomology of $K^*$, but the latter is isomorphic to homology since we decided to work over a field. This is just for convenience --- the proofs go through over any coefficient ring after adjusting for the use of cohomology in place of homology at certain points.}. It follows that $\betti(K)=\betti(K^*)$. 

If $v$ is a vertex of a simplicial complex $K$ then we define the \emph{link} of $v$ as $\lk_Kv=\{\tau\in K~:~ v\not\in\tau,\ \tau\cup\{v\}\in K\}$. It is a standard observation that there is a cofibre sequence
$$\lk_Kv\to K\setminus v\to K$$
and therefore
\begin{equation}
\label{eq:bettiinequality}
\betti(K)\leq \betti(K\setminus v)+\betti(\lk_Kv).
\end{equation}

If $K=\ind(G)$ then we have $K\setminus v=\ind(G\setminus v)$ and $\lk_Kv=\ind(G\setminus N_G[v])$.

We will use the shorthand notation
$$\tbetti(G):=\betti(\ind(G))=\sum_{i}\betti_i(\ind(G)).$$

Here are some standard properties of the function $\tbetti(G)$.
\begin{lemma}
\label{lem:betti}
We have the following properties.
\begin{itemize}
\item[a)] $\tbetti(G\sqcup H)=\tbetti(G)\tbetti(H)$,
\item[b)] $\tbetti(G)=0$ if $G$ has an isolated vertex,
\item[c)] for any $v\in V(G)$ $$\tbetti(G)\leq \tbetti(G\setminus v)+\tbetti(G\setminus N_G[v]),$$ 
\item[d)] $\tbetti(\emptyset)=1$.
\end{itemize}
\end{lemma}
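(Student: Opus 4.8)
The plan is to verify each of the four properties of $\tbetti$ by unwinding the definition $\tbetti(G)=\betti(\ind(G))$ and invoking the corresponding topological facts already recorded in the preliminaries.

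For part a), I would use the join identity~\eqref{eq:joinind}, namely $\ind(G\sqcup H)=\ind(G)\ast\ind(H)$, together with the K\"unneth-type formula for the reduced homology of a join over a field: $\redhom_*(K\ast L;\kkk)\cong \bigoplus_{i+j=*-1}\redhom_i(K;\kkk)\otimes\redhom_j(L;\kkk)$. Summing dimensions over all degrees turns the direct sum and tensor product into a product of the total Betti numbers, giving $\betti(K\ast L)=\betti(K)\betti(L)$, hence $\tbetti(G\sqcup H)=\tbetti(G)\tbetti(H)$. For part d), the empty graph has $\ind(\emptyset)=\{\emptyset\}$, the void-looking complex consisting only of the empty face; this is the $(-1)$-sphere, with $\redhom_{-1}=\kkk$ and all other reduced homology zero, so $\tbetti(\emptyset)=1$. (One should be slightly careful here to distinguish the empty space $\emptyset$, for which $\betti=0$, from the complex $\{\emptyset\}$ containing the empty simplex; the convention $\ind(\emptyset\text{ as a graph})=\{\emptyset\}$ is what makes a) consistent, since a disjoint union with the empty graph should not change anything.)

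For part b), if $v$ is an isolated vertex of $G$ then $v$ is a cone point of $\ind(G)$: every independent set $W$ satisfies that $W\cup\{v\}$ is again independent, so $\ind(G)=\{v\}\ast\ind(G\setminus v)$ is a cone, hence contractible, so all reduced homology vanishes and $\tbetti(G)=0$. Alternatively this is the special case of a) with $H$ a single vertex, using $\tbetti(\{v\})=\betti(\text{point})=0$. For part c), I would apply the general inequality~\eqref{eq:bettiinequality} to $K=\ind(G)$ at the vertex $v$, together with the two identities recorded just before the lemma, $K\setminus v=\ind(G\setminus v)$ and $\lk_Kv=\ind(G\setminus N_G[v])$, which immediately yield $\tbetti(G)\leq\tbetti(G\setminus v)+\tbetti(G\setminus N_G[v])$.

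None of these steps presents a genuine obstacle: the lemma is essentially a dictionary translating standard simplicial-topology facts (behaviour of Betti numbers under joins, cones, and the link cofibre sequence) into the language of independence complexes, and all the needed input is already stated in Section~\ref{section:prelim}. The only point requiring a little care is the bookkeeping of reduced homology in degree $-1$ and the empty-complex conventions in parts a), b) and d), so that the multiplicativity in a) is consistent with $\tbetti(\emptyset)=1$ and with $\tbetti=0$ for graphs with isolated vertices.
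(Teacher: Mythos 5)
Your proof is correct and follows essentially the same route as the paper: the Milnor join formula applied via $\ind(G\sqcup H)=\ind(G)\ast\ind(H)$ for a) and b) (with the cone-point observation for b)), the cofibre-sequence inequality~\eqref{eq:bettiinequality} for c), and the degree $-1$ reduced homology convention for d). The only difference is cosmetic: the paper simply declares $\ind(\emptyset)=\emptyset$ with $\betti_{-1}(\emptyset)=1$, while you phrase the same convention as the complex $\{\emptyset\}$ containing only the empty face.
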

\begin{proof}
The reduced homology of the join is given, over a field, by the formula (eg.\cite[Lemma 2.1]{Milnor})
$$\redhom_k(X\ast Y)=\bigoplus_{\substack{i,j\geq -1\\ i+j=k-1}} \redhom_i(X)\otimes \redhom_j(Y), \qquad k\geq -1$$
That implies $\betti(X\ast Y)=\betti(X)\betti(Y)$ and in consequence we get parts a) and b). Part b) also follows since $\ind(G)$ is a cone whose apex is the isolated vertex. 

Part c) is just \eqref{eq:bettiinequality} applied to $K=\ind(G)$.

Part d) holds because the empty space has a single non-trivial reduced homology group in degree $-1$, that is $\betti_{-1}(\emptyset)=1$ and $\betti_i(\emptyset)=0$ for $i\geq 0$.
\end{proof}

\begin{remark}
\label{remark:euler}
There are other functions which satisfy conditions a)-d) of the above lemma. One of them is $|\widetilde{\chi}(\ind(G))|$, where $\widetilde{\chi}$ is the reduced Euler characteristic of a space. The quantity $\widetilde{\chi}(\ind(G))$ is the evaluation at $-1$ of the independence polynomial of $G$, and as such has been studied before, see eg. \cite{LeMa, LeMa2}. Moreover, in all of the lower bound constructions for $\tbetti(G)$ in Section~\ref{section:lower} we will use graphs $G$ for which the spaces $\ind(G)$ are, up to homotopy, wedges of spheres of the same dimension. It follows that those examples are also extremal for $|\widetilde{\chi}(\ind(G))|$, with the same extremal values.
\end{remark}

\section{Proof of Theorem~\ref{thm:maintheorem1}}
\label{section:proof1}

\subsection*{Definitions of $\Theta$ and $\Gamma$.} We begin by defining the required constants. Let $\Theta_d=d^{1/(d+1)}$ for $d\geq 1$. An elementary check shows that $\Theta_4\geq \Theta_d$ for all $d\geq 1$ and we define 
\begin{equation}\label{eq:defntheta}\Theta=\Theta_4=4^{1/5}\approx 1.320.\end{equation}

Next, consider, for any $d\geq 1$, the function
$$f_d(x)=x^{-(d+1)}+x^{-(d+2)}+\cdots+x^{-2d}.$$
It is decreasing for $x\in[1,2]$ with $f_d(1)\geq 1$ and $f_d(2)<1$, so there is a unique $\Gamma_d\in[1,2]$ for which $f_d(\Gamma_d)=1$. Again, an easy check reveals that $\Gamma_3\geq \Gamma_d$ for all $d\geq 1$, and we set
\begin{equation}\label{eq:defngamma}\Gamma=\Gamma_3\approx 1.250.\end{equation}

Now we can prove our main result.
\begin{theorem}[Theorem~\ref{thm:maintheorem1}]
For any graph $G$ with at most $n$ vertices we have $\tbetti(G) \leq \Theta^n$. If $G$ is triangle-free then $\tbetti(G) \leq \Gamma^n$.
\end{theorem}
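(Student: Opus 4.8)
The plan is to prove, by induction on the number of vertices $n=|V(G)|$, the sharper statements $\tbetti(G)\le\Theta^{|V(G)|}$ for every $G$ and $\tbetti(G)\le\Gamma^{|V(G)|}$ for every triangle-free $G$; since $\Theta,\Gamma>1$ the bounds for graphs ``with at most $n$ vertices'' follow at once. The ingredients will be only Lemma~\ref{lem:betti} and the two elementary optimisations ``$\Theta_4\ge\Theta_d$'' and ``$\Gamma_3\ge\Gamma_d$'' recorded above. The base cases are immediate: $\tbetti(\emptyset)=1=\Theta^0=\Gamma^0$, and if $G$ has an isolated vertex then $\tbetti(G)=0$ by Lemma~\ref{lem:betti}(b).

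For the inductive step I fix a vertex $v$ of minimum degree $d=\mindeg(G)$ with $N_G(v)=\{u_1,\dots,u_d\}$ (if $d=0$ we are done by the previous remark). The natural first attempt, applying Lemma~\ref{lem:betti}(c) to $v$, is too lossy for small $d$; instead I peel off the entire neighbourhood of $v$. Writing $G_j=G\setminus\{u_1,\dots,u_j\}$, I apply Lemma~\ref{lem:betti}(c) to the vertex $u_i$ inside $G_{i-1}$, for $i=1,\dots,d$ in turn; telescoping the resulting inequalities gives
\begin{equation*}
\tbetti(G)\ \le\ \sum_{i=1}^{d}\tbetti\bigl(G_{i-1}\setminus N_{G_{i-1}}[u_i]\bigr)\ +\ \tbetti\bigl(G\setminus\{u_1,\dots,u_d\}\bigr).
\end{equation*}
The crucial observation is that the last term vanishes: in $G\setminus\{u_1,\dots,u_d\}=G\setminus N_G(v)$ the vertex $v$ has lost all its neighbours, so Lemma~\ref{lem:betti}(b) applies. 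For the remaining summands, $G_{i-1}\setminus N_{G_{i-1}}[u_i]$ is the induced subgraph of $G$ on $V(G)\setminus(\{u_1,\dots,u_{i-1}\}\cup N_G[u_i])$, and because $\deg_G u_i\ge d$ the deleted set has at least $d+1$ elements; so by the inductive hypothesis each summand is at most $\Theta^{\,n-d-1}$. Hence $\tbetti(G)\le d\,\Theta^{\,n-d-1}$, and $d\,\Theta^{-d-1}\le 1$ is exactly the inequality $\Theta_d=d^{1/(d+1)}\le 4^{1/5}=\Theta$. This settles the first bound.

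For the triangle-free statement I run the very same peeling, but now $u_1,\dots,u_d$ are pairwise non-adjacent (a triangle would otherwise appear through $v$), so for each $i$ none of $u_1,\dots,u_{i-1}$ belongs to $N_G[u_i]$. Thus the set $\{u_1,\dots,u_{i-1}\}\cup N_G[u_i]$ deleted at step $i$ is a \emph{disjoint} union of size at least $(i-1)+(d+1)=d+i$, and since induced subgraphs of triangle-free graphs are triangle-free, the inductive hypothesis bounds the $i$-th summand by $\Gamma^{\,n-d-i}$. Summing,
\begin{equation*}
\tbetti(G)\ \le\ \sum_{i=1}^{d}\Gamma^{\,n-d-i}\ =\ \Gamma^{\,n}\sum_{k=d+1}^{2d}\Gamma^{-k}\ =\ \Gamma^{\,n}\,f_d(\Gamma)\ \le\ \Gamma^{\,n},
\end{equation*}
where the last step uses that $f_d$ is decreasing on $[1,2]$ with $f_d(\Gamma_d)=1$ and $\Gamma=\Gamma_3\ge\Gamma_d$.

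Almost everything here is bookkeeping; the one genuinely non-obvious step -- and the main point of the argument -- is to apply the deletion/link inequality along the neighbourhood $N_G(v)$ rather than at $v$ itself, which is what makes the final residual term collapse by isolation of $v$. After that, the general and triangle-free bounds diverge only in how fast the deleted vertex sets grow ($d+1$ each time versus $d+i$ at step $i$), and the numerics needed are exactly the maximisations $\Theta_4=\max_d\Theta_d$ and $\Gamma_3=\max_d\Gamma_d$ built into the definitions of the constants. It is worth checking the degenerate case $d\in\{0,1\}$ by hand, and noting that equality in the first bound already holds, step for step, for $G=K_5$, the building block of the extremal construction.
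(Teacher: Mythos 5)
Your proposal is correct and follows essentially the same route as the paper's own proof: induct on the number of vertices, peel off the neighbourhood of a minimum-degree vertex $v$ one neighbour at a time via the deletion/link inequality of Lemma~\ref{lem:betti}(c), note that the residual term vanishes because $v$ becomes isolated, and close the induction using the vertex counts $n-d-1$ (general case) and $n-d-i$ (triangle-free case) together with the defining optimisations of $\Theta$ and $\Gamma$. The only differences are cosmetic (indexing and describing the deleted sets directly inside $G$), so there is nothing to add.
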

\begin{proof}
We proceed by induction. The result holds for $n=0$ by Lemma~\ref{lem:betti}.d). 

Consider a graph $G$ with $n$ vertices. Let $d=\mindeg(G)$. If $d=0$ then $G$ has an isolated vertex and we use Lemma~\ref{lem:betti}.b). Suppose that $d\geq 1$ and choose any vertex $v$ of degree exactly $d$.  Denote by $v_1,\ldots, v_d$ the neighbours of $v$ in $G$. Next, let $G_i=G\setminus \{v_1,\ldots,v_i\}$ for $i=1,\ldots,d$ and $G_0=G$.

Consecutive application of Lemma~\ref{lem:betti}.c) gives the following bound:
\begin{eqnarray*}
\tbetti(G)=\tbetti(G_0)&\leq &\tbetti(G_1)+\tbetti(G_0\setminus N_{G_0}[v_1]) \\
&\leq &\tbetti(G_2)+\tbetti(G_1\setminus N_{G_1}[v_2])+\tbetti(G_0\setminus N_{G_0}[v_1]) \\
&\cdots&\\
&\leq&\tbetti(G_{d})+\sum_{i=0}^{d-1}\tbetti(G_i\setminus N_{G_i}[v_{i+1}]).
\end{eqnarray*}
However, $G_d$ has $v$ as an isolated vertex, so $\tbetti(G_d)=0$. Eventually we obtain
\begin{equation}
\label{eq:sumbound}
\tbetti(G)\leq\sum_{i=0}^{d-1}\tbetti(G_i\setminus N_{G_i}[v_{i+1}]).
\end{equation}
Since each vertex of $G$ has degree at least $d$, we have $|V(G_i\setminus N_{G_i}[v_{i+1}])|\leq n-d-1$ for all $i=0,\ldots,d-1$. Plugging this into \eqref{eq:sumbound} and using the inductive assumption we obtain
$$\tbetti(G)\leq d\cdot\Theta^{n-d-1}=\Theta^n\cdot\frac{d}{\Theta^{d+1}}\leq \Theta^n\cdot\frac{d}{\Theta_d^{d+1}}=\Theta^n.$$
This completes the proof of the first part.

If, in addition, $G$ is triangle-free then $N_{G}(v_{i+1})\cap N_G(v)=\emptyset$ for all $i=0,\ldots,d-1$ and so $\deg_{G_i}v_{i+1}\geq d$. It follows that $|V(G_i\setminus N_{G_i}[v_{i+1}])|\leq n-d-i-1$. All of the graphs $G_i\setminus N_{G_i}[v_{i+1}]$ are again triangle-free, so by induction we get
\begin{eqnarray*}
\tbetti(G)&\leq & \sum_{i=0}^{d-1}\Gamma^{n-i-d-1}\\
&=&\Gamma^n\sum_{i=0}^{d-1}\Gamma^{-(d+i+1)}=\Gamma^nf_d(\Gamma)\leq\Gamma^nf_d(\Gamma_d)=\Gamma^n.
\end{eqnarray*} 
The proof of the theorem is complete.
\end{proof}

We end this section with an application to the algebraic Betti numbers. Let $\abetti(G)=\sum_{i,\sigma}\abetti_{i,\sigma}(\ind(G))$ be the sum of the finely graded Betti numbers of the Stanley-Reisner ring of the complex $\ind(G)$, that is the total dimension of the minimal resolution of that ring as 	a module over the polynomial ring $\kkk[V(G)]$. We refer to \cite[Chapters 1,5]{MS} for introduction to commutative algebra and resolutions of monomial ideals. The classical Hochster's formula gives
\begin{equation}
\label{eq:hoch}
\abetti(G)=\sum_{W\subseteq V(G)} \tbetti(G[W])
\end{equation}
which, for our purpose, can be just as well taken for the definition of $\abetti(G)$. Note that Hochster's formula together with Lemma~\ref{lem:betti}.a) gives
\begin{equation}
\label{eq:abettiprod}
\abetti(G\sqcup H)=\abetti(G)\abetti(H).
\end{equation}
We immediately get the following bounds.

\begin{proposition}
\label{thm:maintheorem2}
For any $n$-vertex graph $G$ we have
\begin{equation*}
\abetti(G) \leq (\Theta+1)^n.
\end{equation*}
If $G$ is triangle-free then 
\begin{equation*}
\abetti(G) \leq (\Gamma+1)^n.
\end{equation*}
\end{proposition}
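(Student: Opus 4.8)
The plan is to combine Hochster's formula~\eqref{eq:hoch} with the two bounds just proved in Theorem~\ref{thm:maintheorem1}. First I would note that for every subset $W\subseteq V(G)$ the induced subgraph $G[W]$ has exactly $|W|$ vertices, so the first part of Theorem~\ref{thm:maintheorem1} yields $\tbetti(G[W])\leq\Theta^{|W|}$. Moreover, if $G$ is triangle-free then so is every induced subgraph $G[W]$, and the second part of the theorem gives the sharper estimate $\tbetti(G[W])\leq\Gamma^{|W|}$.

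Next I would substitute these estimates term by term into~\eqref{eq:hoch} and group the $2^n$ summands according to the cardinality $k=|W|$, so that in the general case
\begin{equation*}
\abetti(G)=\sum_{W\subseteq V(G)}\tbetti(G[W])\leq\sum_{k=0}^{n}\binom{n}{k}\Theta^k=(\Theta+1)^n
\end{equation*}
by the binomial theorem, and identically $\abetti(G)\leq\sum_{k=0}^{n}\binom{n}{k}\Gamma^k=(\Gamma+1)^n$ when $G$ is triangle-free.

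There is essentially no obstacle here: once Theorem~\ref{thm:maintheorem1} is available the proposition reduces to this one-line binomial computation, which is why the excerpt describes the bounds as following ``immediately''. The only things worth recording are that the resulting constants $\Theta+1\approx 2.320$ and $\Gamma+1\approx 2.250$ are precisely the upper bounds listed in Table~\ref{table1}, and that the argument is visibly lossy — it pretends that a single graph could simultaneously maximize $\tbetti$ on all of its induced subgraphs — which accounts for the gap to the lower-bound constructions of Section~\ref{section:lower}.
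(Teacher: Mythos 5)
Your argument is correct and is exactly the paper's proof: apply Theorem~\ref{thm:maintheorem1} to each induced subgraph $G[W]$ in Hochster's formula~\eqref{eq:hoch}, group by $|W|=k$, and sum via the binomial theorem, with the triangle-free case handled by noting that induced subgraphs of triangle-free graphs are triangle-free. No differences worth noting.
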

\begin{proof}
Use \eqref{eq:hoch} and Theorem~\ref{thm:maintheorem1} to get:
\begin{eqnarray*}
\abetti(G)&=&\sum_k\sum_{W\in {V(G)\choose k}}\tbetti(G[W])\\
&\leq&\sum_k{n\choose k}\Theta^k=(\Theta+1)^n.
\end{eqnarray*}
If $G$ is triangle-free then so are all its induced subgraphs and the proof goes through with $\Gamma$ in place of $\Theta$.
\end{proof}

\section{Further results}
\label{section:proof3}
The main purpose of this Section is to prove Theorem~\ref{thm:maintheorem3}. We first recall a construction which relates arbitrary simplicial complexes to independence complexes of bipartite graphs.

Suppose $K$ is a simplicial complex with $n$ vertices $v_1,\ldots,v_n$ and $m$ maximal faces $F_1,\ldots,F_m$. We construct a bipartite graph $\bipa(K)$ as follows. The two parts of $V(\bipa(K))$ are $\{v_1,\ldots,v_n\}$ and $\{F_1,\ldots,F_m\}$. There is an edge $v_iF_j$ in $\bipa(K)$ if and only if $v_i\not\in F_j$ in $K$.

The following is a result of \cite[Thm.3.7]{Bar} and \cite[Sect. 3]{JonBip}.
\begin{lemma}
\label{fact:bipsim}
For a nonempty simplicial complex $K$ we have a homotopy equivalence
$$\ind(\bipa(K))\htpyequiv\susp\,K.$$
\end{lemma}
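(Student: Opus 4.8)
The plan is to pin down the homotopy type of $\ind(\bipa(K))$ by applying the nerve lemma to a natural cover by contractible subcomplexes. First I would record the faces: an independent set of $\bipa(K)$ splits uniquely as $A\sqcup B$ with $A\subseteq\{v_1,\dots,v_n\}$ and $B\subseteq\{F_1,\dots,F_m\}$, and the only edges of $\bipa(K)$ to avoid are those joining $v_i$ with $F_j$ when $v_i\notin F_j$; hence $A\sqcup B$ is a face of $\ind(\bipa(K))$ precisely when $A\subseteq F_j$ for every $F_j\in B$, i.e. $A\subseteq\bigcap_{F_j\in B}F_j$. In particular the point-vertices $\{v_1,\dots,v_n\}$ span a simplex $\Delta_V$, and for each $j$ the closed star $S_j:=\{\sigma : \sigma\cup\{F_j\}\in\ind(\bipa(K))\}$ is a cone with apex $F_j$. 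These subcomplexes cover $\ind(\bipa(K))$: a face containing some $F_j$ lies in $S_j$, and a face using no face-vertex lies in $\Delta_V$.

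Second, I would check the hypotheses of the nerve lemma for the cover $\{\Delta_V,S_1,\dots,S_m\}$. Using that distinct face-vertices are never adjacent in $\bipa(K)$, any intersection $\bigcap_{j\in T}S_j$ with $\emptyset\ne T\subseteq\{1,\dots,m\}$ is again a cone (with apex any $F_{j_0}$, $j_0\in T$) and contains the face $\{F_j:j\in T\}$, so it is contractible and nonempty. On the other hand, a face of $\Delta_V$ lies in $S_j$ exactly when it is contained in $F_j$, so $\Delta_V\cap\bigcap_{j\in T}S_j$ is the full simplex on the vertex set $\bigcap_{j\in T}F_j$, which is contractible if that set is nonempty and empty otherwise. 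Thus every nonempty intersection in the cover is contractible, and the nerve lemma (see e.g. \cite{BjoTancer}) gives $\ind(\bipa(K))\htpyequiv\mathcal{N}$, with $\mathcal{N}$ the nerve of the cover.

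Third, I would identify $\mathcal{N}$. Write $u_0$ for the vertex of $\mathcal{N}$ coming from $\Delta_V$ and $u_1,\dots,u_m$ for those coming from $S_1,\dots,S_m$. Every subset of $\{u_1,\dots,u_m\}$ is a face of $\mathcal{N}$, so $\mathcal{N}$ contains the full simplex $\Delta^{m-1}$ on $\{u_1,\dots,u_m\}$; and a set $\{u_0\}\cup\{u_j:j\in T\}$ is a face of $\mathcal{N}$ iff $\bigcap_{j\in T}F_j\ne\emptyset$. Hence $\mathcal{N}=\Delta^{m-1}\cup\mathrm{Cone}_{u_0}(\mathcal{N}_0)$, glued along the common subcomplex $\mathcal{N}_0:=\{T\subseteq\{1,\dots,m\}:\bigcap_{j\in T}F_j\ne\emptyset\}\subseteq\Delta^{m-1}$. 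But $\mathcal{N}_0$ is exactly the nerve of the cover of $V(K)$ by the maximal faces $F_1,\dots,F_m$, whose higher intersections are faces of $K$, hence simplices or empty; so a second application of the nerve lemma, now to $K$ itself, gives $\mathcal{N}_0\htpyequiv K$. Finally, $\mathcal{N}$ is the union of the two contractible complexes $\Delta^{m-1}$ and $\mathrm{Cone}_{u_0}(\mathcal{N}_0)$ along $\mathcal{N}_0$; since subcomplex inclusions are cofibrations, $\mathcal{N}\htpyequiv\Delta^{m-1}/\mathcal{N}_0\htpyequiv\susp\mathcal{N}_0\htpyequiv\susp K$, and composing the equivalences proves the lemma.

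I expect the work here to be bookkeeping rather than conceptual. The delicate points are: checking that the ``cone with apex $F_j$'' claims really do only use nonadjacency of face-vertices; making sure the degenerate ranges behave (e.g. $m=1$, or $K$ a single simplex, where $\susp K$ is contractible and one checks $\ind(\bipa(K))$ is too); and reading ``nonempty'' as ``$K$ has at least one vertex'', so that every $F_j$ is a nonempty vertex set and $\Delta_V$ is a genuine simplex rather than $\{\emptyset\}$. Recognizing $\mathcal{N}_0$ as the nerve of $\{F_1,\dots,F_m\}$ and the closing ``two cones glued along a common base yield a suspension'' step are both standard, so I do not anticipate a real obstacle there.
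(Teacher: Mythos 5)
Your proof is correct: independent sets of $\bipa(K)$ are exactly the sets $A\sqcup B$ with $A\subseteq\bigcap_{F_j\in B}F_j$, the intersections in your cover are cones or full simplices as claimed, and the two nerve computations plus the final ``two contractible pieces glued along $\mathcal{N}_0$'' step are sound. Note, however, that the paper does not prove Lemma~\ref{fact:bipsim} itself --- it quotes the result from \cite{Bar} and \cite{JonBip} --- so there is no internal proof to compare against, only those sources. Measured against Barmak's star-cluster argument, your route is slightly more roundabout. Since $\Delta_V\cap S_j$ is the full simplex on $F_j$, the intersection $\Delta_V\cap\bigl(\bigcup_j S_j\bigr)=\bigcup_j\bigl(\Delta_V\cap S_j\bigr)$ is literally a copy of $K$ sitting inside $\Delta_V$ (a subset of the vertex part lies in $S_j$ iff it is contained in $F_j$, so it lies in the union iff it is a face of $K$). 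Your first nerve computation already shows that $Y=\bigcup_j S_j$ is contractible (all intersections of the $S_j$ are nonempty cones, so the nerve of $\{S_1,\dots,S_m\}$ is the full $(m-1)$-simplex); this is also Barmak's theorem that star clusters of simplices in independence complexes are contractible, applied to the simplex $\{F_1,\dots,F_m\}$. Given that, the standard equivalence $X\cup Y\htpyequiv\susp(X\cap Y)$ for contractible subcomplexes --- the same fact the paper invokes in the proof of Lemma~\ref{lem:neibjoin} --- applied with $X=\Delta_V$ gives $\ind(\bipa(K))\htpyequiv\susp K$ directly, with no second nerve application and no need to identify $\mathcal{N}_0$ with $K$ up to homotopy. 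The trade-off is minor: your version applies one standard tool twice in a uniform way, while the shorter version produces $K$ on the nose rather than through the auxiliary nerve $\mathcal{N}_0$.
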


Equipped with this tool we proceed with the proof of Theorem~\ref{thm:maintheorem3}.

\begin{proof}[Proof of Theorem~\ref{thm:maintheorem3}]
a) If $K$ has $n$ vertices and $m$ maximal faces then the bipartite graph $\bipa(K)$ has $n+m$ vertices. Using Lemma~\ref{fact:bipsim} and Theorem~\ref{thm:maintheorem1} we get
$$\betti(K)=\tbetti(\bipa(K))\leq\Gamma^{n+m}.$$

b) The complex $\neibcomplex(G)$ is generated by faces of the form $N_G(v)$ for all non-isolated vertices $v$ of $G$. In particular, $\neibcomplex(G)$ has at most $n$ vertices and at most $n$ maximal faces, so part a) applies.

c) The maximal faces of the Alexander dual $K^*$ are of the form $V(K)\setminus F$ for the minimal non-faces $F$ of $K$. It follows that $\betti(K)=\betti(K^*)\leq \Gamma^{n+m}$ where the first equality is Alexander duality and the second inequality follows from part a) applied to $K^*$.

\end{proof}

For future reference let us also note the following fact. It generalizes \cite[Prop.8]{CsoOsz}, where it was proved when $H=K_s$.

\begin{lemma}
\label{lem:neibjoin}
For any nonempty graphs $G$ and $H$ there is a homotopy equivalence
$$\neibcomplex(G\oplus H)\htpyequiv\susp\,\big(\neibcomplex(G)\ast\neibcomplex(H)).$$
In particular
$$\betti(\neibcomplex(G\oplus H))=\betti(\neibcomplex(G))\cdot\betti(\neibcomplex(H)).$$
\end{lemma}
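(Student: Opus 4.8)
The plan is to establish the homotopy equivalence directly by analyzing the combinatorial structure of $\neibcomplex(G\oplus H)$, and then deduce the Betti number identity from the suspension and join formulas already recorded in Section~\ref{section:prelim}. First I would unwind the definition: in $G\oplus H$ every vertex of $G$ is adjacent to every vertex of $H$ (and vice versa), and adjacencies within $G$ and within $H$ are unchanged. Hence for $v\in V(G)$ we have $N_{G\oplus H}(v)=N_G(v)\sqcup V(H)$, and symmetrically for $w\in V(H)$ we have $N_{G\oplus H}(w)=V(G)\sqcup N_H(w)$. Note also that since $H$ is nonempty, no vertex of $G$ is isolated in $G\oplus H$, and likewise for $H$; so $\neibcomplex(G\oplus H)$ has vertex set exactly $V(G)\sqcup V(H)$.

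Next I would identify the maximal faces. The complex $\neibcomplex(G\oplus H)$ is generated by the sets $N_{G\oplus H}(u)$ as $u$ ranges over $V(G)\sqcup V(H)$, i.e. by the sets $N_G(v)\sqcup V(H)$ for $v\in V(G)$ together with the sets $V(G)\sqcup N_H(w)$ for $w\in V(H)$. I claim this generating family has the same simplicial complex as the family of all sets $A\sqcup B$ with $A$ a face of $\neibcomplex(G)\cup\{\emptyset,\ldots\}$ — more precisely, a subset $A\sqcup B\subset V(G)\sqcup V(H)$ is a face of $\neibcomplex(G\oplus H)$ if and only if either $B\subset N_H(w)$ for some $w$ (so the whole thing sits inside $V(G)\sqcup N_H(w)$) or $A\subset N_G(v)$ for some $v$. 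Spelling this out, $A\sqcup B\in\neibcomplex(G\oplus H)$ iff $A\in\st_{\neibcomplex(G)}$-type condition holds "or" the $H$-side condition holds, where "$A$ is dominated by a common neighbour in $G$ or $A=\emptyset$" — and the key point is that $\emptyset$ is always such a set since $G$ is nonempty (pick any vertex $v$; $\emptyset\subset N_G(v)\cup\{\text{anything}\}$, but one must be slightly careful: $\emptyset$ is a common neighbour situation only if we also allow it, which we do, so $\emptyset\in\neibcomplex(G)$ whenever $G$ has an edge — and if $G$ has no edge then $\neibcomplex(G)$ is empty, so I should treat that degenerate case, using that then $\susp(\neibcomplex(G)\ast\neibcomplex(H))=\susp(\emptyset\ast\neibcomplex(H))=\susp\emptyset=S^0$, and check $\neibcomplex(G\oplus H)$ is contractible or $S^0$ accordingly).

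With the face structure pinned down, I would recognize $\neibcomplex(G\oplus H)$ as a union of two subcomplexes: $C_G=\{A\sqcup B : A\subset N_G(v)\text{ some }v\}$, which is $\neibcomplex(G)\ast\Delta^{V(H)}$ (a join with a full simplex on $V(H)$, hence contractible), and $C_H=\Delta^{V(G)}\ast\neibcomplex(H)$, also contractible, while their intersection is $\neibcomplex(G)\ast\neibcomplex(H)$. A contractible-union-of-two-contractibles glued along $X$ is homotopy equivalent to $\susp X$ by the standard Mayer–Vietoris / nerve-type argument (explicitly: $C_G/C_H$ considerations, or the cofibre sequence $\neibcomplex(G)\ast\neibcomplex(H)\to C_G\to C_G/(\neibcomplex(G)\ast\neibcomplex(H))$ together with $C_G,C_H$ contractible gives the suspension). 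This yields the homotopy equivalence $\neibcomplex(G\oplus H)\htpyequiv\susp(\neibcomplex(G)\ast\neibcomplex(H))$. The Betti number statement then follows immediately: $\betti(\susp X)=\betti(X)$ from Section~\ref{section:prelim}, and $\betti(X\ast Y)=\betti(X)\betti(Y)$ from the join formula in the proof of Lemma~\ref{lem:betti}, so $\betti(\neibcomplex(G\oplus H))=\betti(\neibcomplex(G)\ast\neibcomplex(H))=\betti(\neibcomplex(G))\cdot\betti(\neibcomplex(H))$.

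The main obstacle I anticipate is the bookkeeping around empty and edgeless cases — verifying that $C_G$ and $C_H$ genuinely cover $\neibcomplex(G\oplus H)$ and that their intersection is exactly $\neibcomplex(G)\ast\neibcomplex(H)$ requires the nonemptiness hypotheses on $G$ and $H$ in the right places (so that $V(H)$, resp.\ $V(G)$, is nonempty and every vertex becomes non-isolated), and the sub-case where one of $\neibcomplex(G)$, $\neibcomplex(H)$ is the empty complex needs to be checked against the convention $\emptyset\ast K=K$ and $\susp$ of the empty complex being $S^0$. Once those conventions are aligned the geometric argument is routine; I would present the two-subcomplex decomposition cleanly and invoke the suspension-of-the-intersection principle rather than re-deriving Mayer–Vietoris.
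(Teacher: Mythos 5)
Your proposal is correct and follows essentially the same route as the paper: you cover $\neibcomplex(G\oplus H)$ by the two contractible subcomplexes $\neibcomplex(G)\ast\Delta(V(H))$ and $\Delta(V(G))\ast\neibcomplex(H)$, identify their intersection with $\neibcomplex(G)\ast\neibcomplex(H)$, and invoke the standard fact that a union of two contractible subcomplexes is homotopy equivalent to the suspension of their intersection, with the Betti number identity then coming from the suspension and join formulas. The paper's proof is exactly this decomposition, so no further comparison is needed.
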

\begin{proof}
Let $X, Y$ be two subcomplexes of $\neibcomplex(G\oplus H)$ defined as
\begin{eqnarray*}
X&=&\{\sigma\sqcup\tau~:~\sigma\in\neibcomplex(G), \tau\subset V(H)\},\\
Y&=&\{\sigma\sqcup\tau~:~\sigma\subset V(G), \tau\in\neibcomplex(H)\}.
\end{eqnarray*}
We see that $X\cup Y=\neibcomplex(G\oplus H)$. Indeed, suppose that $\sigma\sqcup\tau\subset V(G)\cup V(H)$ is a set of vertices all of which have a common neighbour in $G\oplus H$. Assuming, without loss of generality, that the common neighbour belongs to $V(G)$, we get $\sigma\in\neibcomplex(G)$. On the other hand, every set $\sigma\sqcup\tau$ with $\sigma\in\neibcomplex(G)$ and $\tau\subset V(H)$ has a common neighbour in $V(G)\subset V(G\oplus H)$.

If $\Delta(A)$ denotes the full simplex with vertex set $A$, then we have
$$X=\neibcomplex(G)\ast\Delta(V(H)), \quad Y=\Delta(V(G))\ast\neibcomplex(H)$$
hence $X$ and $Y$ are contractible. Finally
$$X\cap Y=\{\sigma\sqcup\tau~:~\sigma\in\neibcomplex(G), \tau\in\neibcomplex(H)\}=\neibcomplex(G)\ast\neibcomplex(H).$$
The proof is completed by using the standard homotopy equivalence $X\cup Y\htpyequiv\susp(X\cap Y)$ for contractible subcomplexes $X, Y$ of $X\cup Y$.

\end{proof}

\section{Complexes without missing $d$-faces}
\label{section:nomissing}

Let $\nicef(n,d)$ be the family of simplicial complexes $K$ on $n$ vertices such that every minimal non-face of $K$ has cardinality at most $d$. Set $\nicef_d=\bigcup_n\nicef(n,d)$. These are usually referred to as complexes \emph{without missing faces of dimension at least $d$}, or briefly complexes \emph{without missing $d$-faces}. The family $\nicef_2$ is precisely that of  flag complexes. Since $\nicef(n,d)\subseteq \nicef(n,d+1)$ one might expect a whole hierarchy of inequalities generalizing that of Theorem~\ref{thm:maintheorem1}.

By $\nicem(n,d)$ we denote the family of simplicial complexes $K$ on $n$ vertices such that every maximal face $F$ of $K$ satisfies $|F| \geq n-d$ and we set $\nicem_d=\bigcup_n\nicem(n,d)$. Note that the classes $\nicef(n,d)$ and $\nicem(n,d)$ are Alexander dual to each other:
$$K\in \nicef(n,d) \iff K^*\in\nicem(n,d).$$
We will need the following simple observation.

\begin{lemma}
\label{lem:mclosed}
The class $\nicem_d$ is closed under taking links and vertex deletions.
\end{lemma}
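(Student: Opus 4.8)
The plan is to verify directly from the definitions that if $K \in \nicem(n,d)$ then both $\lk_K v$ and $K \setminus v$ lie in $\nicem(n-1,d)$ for any vertex $v$ of $K$. Since the number of vertices drops by exactly one in each operation, the only content is to check the lower bound on the sizes of maximal faces: we must show that every maximal face of $\lk_K v$ (respectively $K \setminus v$) has cardinality at least $(n-1) - d$.

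First I would handle the deletion $K \setminus v$. A maximal face of $K \setminus v$ is a face $F$ of $K$ with $v \notin F$ that is maximal among $v$-free faces; equivalently, $F$ is maximal in $K$ or $F \cup \{v\} \in K$. In the first case $|F| \geq n - d$, which is already $\geq (n-1)-d$. In the second case, $F \cup \{v\}$ extends to a maximal face $G$ of $K$ with $|G| \geq n-d$; since $v \in G$ and $F$ is a maximal $v$-free face containing $G \setminus \{v\}$, we get $|F| \geq |G| - 1 \geq n - 1 - d = (n-1) - d$. So $K \setminus v \in \nicem(n-1, d)$.

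Next I would handle the link. Recall $\lk_K v = \{\tau : v \notin \tau,\ \tau \cup \{v\} \in K\}$, and note that whenever $\lk_K v$ is nonempty its vertex set is contained in $V(K) \setminus \{v\}$, so it has at most $n-1$ vertices (if it has strictly fewer, the class $\nicem$ is still respected since isolated-vertex issues don't arise for the maximal-face condition — one may pass to the induced subcomplex on its own vertex set, or simply observe $\nicem(n', d) \subseteq \nicem(n, d)$ is not literally true but the relevant bound still improves). If $\tau$ is a maximal face of $\lk_K v$, then $\tau \cup \{v\}$ is a maximal face of $K$: any face of $K$ properly containing $\tau \cup \{v\}$ would have the form $\tau' \cup \{v\}$ with $\tau' \supsetneq \tau$ in $\lk_K v$, contradicting maximality of $\tau$. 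Hence $|\tau \cup \{v\}| \geq n - d$, so $|\tau| \geq n - 1 - d = (n-1) - d$, giving $\lk_K v \in \nicem(n-1, d)$.

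I do not anticipate a genuine obstacle here; the statement is essentially bookkeeping. The one subtlety worth being careful about is the vertex count when the link or deletion loses vertices beyond $v$ itself (for instance if some vertex of $K$ is only connected to $v$): one should phrase the conclusion so that $K' \in \nicem(n', d)$ for the actual number $n'$ of vertices of $K'$, and the maximal-face bound $|F| \geq n' - d$ follows because dropping a vertex from $V(K) \setminus \{v\}$ that lies in no relevant maximal face only decreases $n'$ without decreasing $|F|$. With that caveat noted, the two cases above complete the proof.
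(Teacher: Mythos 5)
Your proof is correct and follows essentially the same route as the paper: maximal faces of $\lk_K v$ correspond to maximal faces of $K$ containing $v$, maximal faces of $K\setminus v$ arise from maximal faces of $K$ by removing at most the vertex $v$, and the vertex count of the link (possibly below $n-1$) is handled by the observation that the bound only improves. Your two-case analysis for the deletion is just a slightly more detailed phrasing of the paper's one-line argument, so there is nothing to add.
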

\begin{proof}
Suppose $K\in\nicem(n,d)$ and let $v\in V(K)$. Every maximal face of $K\setminus v$ is of the form $F\setminus v$ for a maximal face $F$ of $K$, hence it has cardinality at least $n-d-1$ which proves $K\setminus v\in \nicem(n-1,d)$.

Next, suppose that $\lk_K v$ has $n'\leq n-1$ vertices and let $F'$ be any maximal face in $\lk_K v$. Then $F'\cup\{v\}$ is a maximal face in $K$ and therefore
$$|F'|+1\geq n-d\geq n'+1-d$$
so $|F'|\geq n'-d$ as required.
\end{proof}

We can now formulate the analogue of Theorem~\ref{thm:maintheorem1} for the class $\nicef_d$. Define $\theta_d\in [1,2]$ as the unique solution of the equation
$$x^d=\sum_{i=0}^{d-1}x^i.$$
\begin{theorem}
\label{thm:missingfacesbound}
If $K\in \nicef(n,d)$ or $K\in\nicem(n,d)$ then $\betti(K)\leq (\theta_d)^n$.
\end{theorem}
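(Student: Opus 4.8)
The plan is to reduce everything, via Alexander duality, to the family $\nicem(n,d)$, and then run an induction on $n$ in the spirit of the proof of Theorem~\ref{thm:maintheorem1}, but peeling off the vertices \emph{outside} a maximal face rather than the neighbours of a minimum-degree vertex.

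First, since $\betti(K)=\betti(K^*)$ and $K\in\nicef(n,d)$ if and only if $K^*\in\nicem(n,d)$, it suffices to prove $\betti(K)\leq\theta_d^{\,n}$ for $K\in\nicem(n,d)$. I would induct on $n$, the case $n=0$ being trivial ($K=\{\emptyset\}$ has $\betti(K)=1=\theta_d^{\,0}$). For $n\geq 1$ pick a maximal face $F$ of $K$; by definition of $\nicem(n,d)$ the complement $V(K)\setminus F=\{u_1,\dots,u_k\}$ has size $k\leq d$. If $k=0$ then $K$ is a full simplex and $\betti(K)=0$, so assume $k\geq 1$. Setting $K_0=K$ and $K_i=K_{i-1}\setminus u_i$ (each $u_i$ is still a vertex of $K_{i-1}$), iterated use of \eqref{eq:bettiinequality} gives
$$\betti(K)\ \leq\ \betti(K_k)+\sum_{i=1}^{k}\betti\big(\lk_{K_{i-1}}u_i\big).$$
The point of deleting exactly the vertices missing from $F$ is that $K_k=K[F]$ is the full simplex on the face $F$, hence contractible, so $\betti(K_k)=0$.

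It then remains to bound each $\betti(\lk_{K_{i-1}}u_i)$. By Lemma~\ref{lem:mclosed}, applied repeatedly, each $K_{i-1}$ and each link $\lk_{K_{i-1}}u_i$ again lies in $\nicem_d$; and since $K_{i-1}$ has $n-(i-1)$ vertices, its link at $u_i$ has at most $n-i$ vertices. The inductive hypothesis, together with $\theta_d\geq 1$, gives $\betti(\lk_{K_{i-1}}u_i)\leq\theta_d^{\,n-i}$, and summing over $i\leq k\leq d$ and invoking the defining equation $\theta_d^{\,d}=\sum_{j=0}^{d-1}\theta_d^{\,j}$ yields
$$\betti(K)\ \leq\ \sum_{i=1}^{d}\theta_d^{\,n-i}\ =\ \theta_d^{\,n-d}\sum_{j=0}^{d-1}\theta_d^{\,j}\ =\ \theta_d^{\,n},$$
completing the induction.

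The whole argument is short; the only real decision is \emph{which} vertices to delete, and the observation that the vertices missing from a maximal face both number at most $d$ and leave behind a contractible simplex is exactly what makes the recursion collapse to the $d$-bonacci equation defining $\theta_d$. The one thing to keep an eye on is that the operations used preserve the relevant class, which is the content of Lemma~\ref{lem:mclosed}; no such closure is available directly for $\nicef_d$, which is why the detour through Alexander duality is needed, and also the reason the resulting bound for flag complexes ($d=2$) is far from the sharp one in Theorem~\ref{thm:maintheorem1}.
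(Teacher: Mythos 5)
Your proposal is correct and follows essentially the same route as the paper: reduce to $\nicem(n,d)$ by Alexander duality, delete the at most $d$ vertices outside a maximal face using \eqref{eq:bettiinequality}, note the remaining complex is a nonempty simplex, control the links via Lemma~\ref{lem:mclosed} and induction, and close with the defining equation for $\theta_d$. No issues to flag.
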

\begin{proof}
By the previous remarks about Alexander duality it suffices to consider $K\in\nicem(n,d)$. We prove the inequality by induction on $n$, with the case $n=0$ being obvious. Let $n\geq 1$. For a complex $K\in \nicem(n,d)$ let $F\neq \emptyset$ be any maximal face and let $v_1,\ldots,v_s$ be the vertices in $V(K)\setminus F$. By assumption we have $s\leq d$. Let $K_i=K\setminus\{v_1,\ldots,v_i\}$ and $K_0=K$. 

Using repeatedly the inequality \eqref{eq:bettiinequality} we obtain
$$\betti(K)\leq \sum_{i=0}^{s-1}\betti(\lk_{K_i}v_{i+1})+\betti(K\setminus \{v_1,\ldots,v_s\}).$$
The last term is $\betti(F)=0$ since $F$ is a nonempty simplex. For every $i$ we have $|V(K_i)|=n-i$ hence $|V(\lk_{K_i}v_{i+1})|\leq n-i-1$. By the induction hypothesis and Lemma~\ref{lem:mclosed}:
\begin{eqnarray*}
\betti(K)&\leq& \sum_{i=0}^{s-1}\theta_d^{n-i-1}\leq\sum_{i=0}^{d-1}\theta_d^{n-i-1}=\theta_d^n.
\end{eqnarray*}
\end{proof}

For $d=2$ Theorem~\ref{thm:missingfacesbound} gives suboptimal bounds since $\theta_2=\frac{1+\sqrt{5}}{2}\approx 1.61$ while $\Theta\approx 1.32$. We suspect that the actual upper bound for $\betti(K)$ for $K\in\nicef_d$ is given by the complexes described in Example~\ref{ex:missingfaces}.

\begin{remark}
The proof of Theorem~\ref{thm:maintheorem1} can be easily adapted to show the classical result of Moon and Moser \cite{MM}, which in our language states that an $n$-vertex flag complex $K$ has at most $(3^{1/3})^n$ maximal faces (the bound is optimal). The analogous problem of bounding the number of maximal faces for complexes in $\nicef(n,d)$, for fixed $d$, is not well understood and seems hard --- to our best knowledge the only result in this direction is the paper \cite{Lonc} with $d=3$. This can be an indicator that also finding the optimal bound in Theorem~\ref{thm:missingfacesbound} is not easy.
\end{remark}

\medskip
As an aside, we apply the above framework to give a homotopy-theoretic proof of the following fact about the location of Betti numbers of complexes without missing $d$-faces.
\begin{proposition}
\label{prop:nohighhomology}
Suppose $K\in\nicef(n,d)$. Then $\redhom_i(K)=0$ for $i>n\cdot\frac{d-1}{d}-1$.
\end{proposition}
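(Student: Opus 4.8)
The plan is to pass to the Alexander dual and prove the equivalent statement for the class $\nicem_d$. Precisely, by Alexander duality it suffices to show:
\begin{equation*}
L\in\nicem(n,d)\ \Longrightarrow\ \redhom_j(L)=0\ \text{ whenever }\ j<\tfrac{n}{d}-2 .
\end{equation*}
Indeed, if $K\in\nicef(n,d)$ then $K^*\in\nicem(n,d)$, and since $\betti_i(K)=\betti_{n-i-3}(K^*)$ we have $\redhom_i(K)=0$ iff $\redhom_{n-i-3}(K^*)=0$; a one-line computation shows that the inequality $i>n\cdot\frac{d-1}{d}-1$ is equivalent to $n-i-3<\frac{n}{d}-2$, so the displayed implication applied to $L=K^*$ (with $j=n-i-3$) gives $\redhom_i(K)=0$ in exactly the required range. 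To prove the implication, first dispose of the trivial case $n\leq d$: there $\frac{n}{d}-2\leq-1$, and reduced homology always vanishes in degrees below $-1$. So from now on $n>d\geq1$, whence every maximal face of $L$ has at least $n-d\geq1$ vertices.

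Now I would run a nerve argument. Let $F_1,\dots,F_m$ be the maximal faces of $L$ and let $\Delta(F_i)$ denote the full simplex on $F_i$; these are subcomplexes of $L$, they cover $L$ (the $F_i$ being the facets), and for every nonempty $A\subseteq\{1,\dots,m\}$ one has $\bigcap_{i\in A}\Delta(F_i)=\Delta\big(\bigcap_{i\in A}F_i\big)$, which is a (contractible) simplex when $\bigcap_{i\in A}F_i\neq\emptyset$ and empty otherwise. By the Nerve Lemma for covers by subcomplexes, $L$ is homotopy equivalent to the nerve $N$ of this cover, i.e. the complex on vertex set $\{1,\dots,m\}$ whose faces are the sets $A$ with $\bigcap_{i\in A}F_i\neq\emptyset$. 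The point of using $\nicem_d$ is that the $F_i$ are large: each omits at most $d$ of the $n$ vertices of $L$, so
\begin{equation*}
\Big|\bigcap_{i\in A}F_i\Big|\ \geq\ n-\sum_{i\in A}\big|V(L)\setminus F_i\big|\ \geq\ n-d\,|A|\ >\ 0\qquad\text{whenever }|A|<n/d .
\end{equation*}
Hence, letting $c$ be the largest integer with $c<n/d$, the nerve $N$ contains every subset of $\{1,\dots,m\}$ of size $\leq c$, i.e. $N$ contains the full $(c-1)$-skeleton of the simplex on its own vertex set.

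It remains to read off the homology. Any complex that contains the full $(c-1)$-skeleton of a simplex (and is contained in that simplex) has the same chain groups, and the same boundary maps, in degrees $\leq c-1$ as that $(c-1)$-skeleton, which is $(c-2)$-connected (a wedge of $(c-1)$-spheres, or a point). Therefore $\redhom_j(L)\cong\redhom_j(N)=0$ for every $j\leq c-2$; and since $c$ is the largest integer strictly below $n/d$, the condition that an integer $j$ satisfy $j\leq c-2$ is the same as the condition that it satisfy $j<\frac{n}{d}-2$ (both amount to $j+2<n/d$). This proves the implication, hence the proposition. I do not foresee a genuine obstacle here, but it is worth explaining why the peeling method behind Theorem~\ref{thm:missingfacesbound} is replaced: that method has the wrong monotonicity for a connectivity statement, since passing to a link or a vertex-deletion inside $\nicem_d$ decreases the vertex count and thereby \emph{weakens} the inductive hypothesis about low-dimensional homology — so a single application of the Nerve Lemma is the right move, and the remaining work is only the routine ceiling bookkeeping exhibited above.
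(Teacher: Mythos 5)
Your proof is correct and takes essentially the same route as the paper's: pass to the Alexander dual in $\nicem(n,d)$, observe that any collection of fewer than $n/d$ facets has nonempty intersection so the nerve of the facet cover contains a full skeleton of the appropriate dimension, and conclude by the nerve theorem and duality. Your write-up merely adds the degenerate case $n\le d$ and the explicit chain-level bookkeeping, which the paper leaves implicit.
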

For instance, when $d=2$ it says that a flag complex has no homology in dimensions $i>\frac{n}{2}-1$ or, in other words, that a flag complex requires at least $2i+2$ vertices to have nontrivial $i$-th homology group. This is a well-known and exploited fact.

\begin{proof}
Let $j=n-i-1$. The inequality $i>n\cdot\frac{d-1}{d}-1$ is equivalent to $j<\frac{n}{d}$. Since $K^*\in\nicem(n,d)$ the intersection of any $j$ maximal faces of $K^*$ contains at least $n-dj$ vertices, in particular it is nonempty as $n-dj>0$. It means that the nerve of the family of maximal faces of $K^*$ has full $(j-1)$-dimensional skeleton and therefore it has trivial (co)homology in dimension $j-2$. By the nerve theorem the same holds for $K^*$. Then, by Alexander duality, $K$ has trivial homology in dimension $n-(j-2)-3=i$.
\end{proof}

\section{Lower bounds}
\label{section:lower}
In this section we describe various families of graphs and complexes with exponentially large Betti numbers, complementing the main upper bounds. All these examples arise from a carefully chosen set small graphs by taking multiple disjoint unions or joins. First observe that the space $\ind(K_s)$ is a disjoint union of $s$ points and therefore $\tbetti(K_s)=s-1$.

\begin{example}
\label{ex:bigBetti}
Suppose $n$ is divisible by $s$ and let $G_n=\bigsqcup^{n/s}K_s$. By Lemma~\ref{lem:betti}.a) we have
$$\tbetti(G_n)=\tbetti(K_s)^{n/s}=((s-1)^{1/s})^n.$$
The quantity $(s-1)^{1/s}$ is maximized for $s=5$, so when $n$ is divisible by $5$ one gets an $n$-vertex graph $G_n$ with $\tbetti(G_n)=\Theta^n$. This shows that the first bound in Theorem~\ref{thm:maintheorem1} is tight.

The proof of Theorem~\ref{thm:maintheorem1} can be adapted to show that disjoint unions of $K_5$'s are the unique extremal graphs for this problem (if $n$ is not divisible by $5$ one must adjust the size of one or two cliques). We omit the details.
\end{example}

\begin{example}
\label{ex:bigBettiBip}
We will construct our best example of a bipartite graph $G_n$ with large $\tbetti(G_n)$. To make the construction more transparent we go via the correspondence described in Section~\ref{section:proof3}.

Let $K$ be the two-dimensional simplicial complex with vertex set $\{1,\ldots,7\}$ and with $7$ maximal faces
$$\{1,4,5\},\,\{1,3,6\},\,\{1,2,7\},\,\{2,3,5\},\,\{2,4,6\},\,\{3,4,7\},\,\{5,6,7\}.$$
Note that this is precisely the Steiner triple system associated to the Fano plane. The $1$-skeleton of $K$ is the complete graph $K_7$ and every edge belongs to exactly one triangle. It follows that $K$ collapses to a graph with $7$ vertices and ${7 \choose 2}-7=14$ edges. In particular, it is homotopy equivalent to a wedge of $14-7+1=8$ circles, hence  $\betti(K)=8$.

The bipartite graph $\bipa(K)$ has $14$ vertices ($7$ in each part) and $\tbetti(\bipa(K))=\betti(K)=8$ by Lemma~\ref{fact:bipsim}.

If $n$ is divisible by $14$ then the graph $G_n=\bigsqcup^{n/14}\bipa(K)$ is a bipartite graph with 
$$\tbetti(G_n)=8^{n/14}=(8^{1/14})^n\approx 1.160^n.$$
We were not able to improve on this example in the class of all triangle-free graphs --- see Conjecture~\ref{conj:biptri}.

\end{example}

\medskip
For the examples involving the algebraic Betti numbers we need the following auxiliary results.

\begin{lemma}
\label{lem:abetticlique}
For any integer $s\geq 1$ we have $\abetti(K_s)=2^{s-1}(s-2)+2.$
\end{lemma}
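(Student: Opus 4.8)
The plan is to compute $\abetti(K_s) = \sum_{W \subseteq V(K_s)} \tbetti(K_s[W])$ directly from Hochster's formula \eqref{eq:hoch}, exploiting the fact that every induced subgraph of $K_s$ is again a complete graph. If $|W| = k$ then $K_s[W] = K_k$, and we recorded in Section~\ref{section:lower} that $\tbetti(K_k) = k-1$; note this also holds for $k = 0$ since $\tbetti(\emptyset) = 1 \neq -1$, so the empty subset must be handled separately, as well as $k=1$ where $\tbetti(K_1) = 0$. Collecting terms by cardinality,
\begin{equation*}
\abetti(K_s) = \sum_{k=0}^{s} \binom{s}{k}\tbetti(K_k) = \tbetti(\emptyset) + \sum_{k=1}^{s}\binom{s}{k}(k-1) = 1 + \sum_{k=1}^{s}\binom{s}{k}(k-1).
\end{equation*}

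The next step is to evaluate the sum $\sum_{k=1}^{s}\binom{s}{k}(k-1)$ by splitting it as $\sum_{k=1}^{s} k\binom{s}{k} - \sum_{k=1}^{s}\binom{s}{k}$. The first piece is the standard identity $\sum_{k=0}^{s} k\binom{s}{k} = s 2^{s-1}$ (differentiate $(1+x)^s$ and set $x=1$, or use $k\binom{s}{k} = s\binom{s-1}{k-1}$), and the second is $\sum_{k=1}^{s}\binom{s}{k} = 2^s - 1$. Hence
\begin{equation*}
\abetti(K_s) = 1 + s 2^{s-1} - (2^s - 1) = s 2^{s-1} - 2\cdot 2^{s-1} + 2 = 2^{s-1}(s-2) + 2,
\end{equation*}
which is exactly the claimed formula.

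There is no real obstacle here; the only point requiring a moment of care is the bookkeeping for the small values of $k$ (namely that the formula $\tbetti(K_k) = k-1$ fails at $k=0$, where $\tbetti(\emptyset) = 1$ by Lemma~\ref{lem:betti}.d), so the lone extra $+1$ in the final answer is precisely the discrepancy $1 - (0-1) = 2$ contributed at $k=0$ relative to naively extending the formula). One should also sanity-check the result against tiny cases: $\abetti(K_1) = 2^{0}(-1) + 2 = 1$, consistent with $\ind(K_1)$ being a point so that the Stanley–Reisner ring is the polynomial ring $\kkk[v_1]$ with a length-one free resolution; and $\abetti(K_2) = 2^{1}\cdot 0 + 2 = 2$, matching the resolution $0 \to \kkk[v_1,v_2] \xrightarrow{(v_1v_2)} \kkk[v_1,v_2] \to \kkk[v_1,v_2]/(v_1v_2) \to 0$ of the edge ideal. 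These checks confirm both the identity and the edge-case handling.
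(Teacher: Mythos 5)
Your proposal is correct and follows exactly the paper's route: apply Hochster's formula \eqref{eq:hoch}, note that the induced subgraphs of $K_s$ on $k\geq 1$ vertices are copies of $K_k$ with $\tbetti(K_k)=k-1$ while the empty subgraph contributes $1$, and reduce to $1+\sum_{k=1}^{s}\binom{s}{k}(k-1)$. The only difference is that you carry out the binomial-identity computation (and the small sanity checks) that the paper leaves as ``an easy exercise.''
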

\begin{proof}
The clique $K_s$ has ${s\choose i}$ induced subgraphs isomorphic to $K_i$. Taking into account also the empty subgraph, \eqref{eq:hoch} yields:
$$\abetti(K_s)=1+\sum_{i=1}^s{s\choose i}(i-1).$$
That this sum equals $2^{s-1}(s-2)+2$ is left as an easy exercise.
\end{proof}

Let $\widetilde{K_{s,s}}=K_{s,s}-M$ denote the complete bipartite graph $K_{s,s}$ from which a perfect matching $M$ was removed (i.e. the unique $(s-1)$-regular bipartite graph with two parts of size $s$).

\begin{lemma}
\label{lem:abettibip}
For any integer $s\geq 1$ we have $\abetti(\widetilde{K_{s,s}})=4^{s-1}(s-4)+2\cdot 3^s-2^{s+1}+2.$
\end{lemma}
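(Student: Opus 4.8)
The plan is to compute $\abetti(\widetilde{K_{s,s}})$ directly from Hochster's formula \eqref{eq:hoch}, i.e. by summing $\tbetti(\widetilde{K_{s,s}}[W])$ over all induced subgraphs $W$. The first step is to classify induced subgraphs of $\widetilde{K_{s,s}}$. Label the two parts $\{a_1,\ldots,a_s\}$ and $\{b_1,\ldots,b_s\}$ with the missing matching being $\{a_ib_i\}$; an induced subgraph is determined by choosing a subset $A$ of the $a$-indices and a subset $B$ of the $b$-indices, and only the sizes $|A|$, $|B|$ and the size $k$ of the overlap $|A\cap B|$ matter up to isomorphism. So the induced subgraph on parameters $(p,q,k)$ (with $|A|=p$, $|B|=q$, $|A\cap B|=k$, necessarily $k\le\min(p,q)$) is $\widetilde{K_{p,q}}$ with exactly $k$ of the ``forbidden'' edges present as non-edges — concretely it is the bipartite complement of the union of a $k$-matching and an empty graph on the remaining $p+q-2k$ vertices. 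The number of such induced subgraphs is $\binom{s}{k}\binom{s-k}{p-k}\binom{s-k}{q-k}$.

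The second step is to determine $\tbetti$ of each such bipartite graph. Its independence complex is, by \eqref{eq:joinind}, a join: the $p+q-2k$ vertices not in any forbidden pair contribute discrete/contractible factors, while each of the $k$ forbidden pairs $\{a_i,b_i\}$ is a connected component isomorphic to... no — one must be careful, because $a_i$ is still adjacent to every $b_j$ with $j\ne i$. In fact the cleanest route is to use the link/deletion recursion or the known homotopy type of $\ind$ of such ``crown-like'' graphs: the independence complex of $\widetilde{K_{k,k}}$ is homotopy equivalent to a wedge of $(k-1)$ spheres of dimension $k-1$ (this is the crown complex / boundary-like computation, essentially the $s=k$ case of Barmak's results or a direct deletion argument), and adding an isolated-in-the-sense-of-fully-joined extra vertex to one side kills the homology. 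So $\tbetti(\widetilde{K_{p,q}}$ with $k$ matching-edges-missing$)$ is $0$ unless $p=q=k$, in which case it equals $k-1$. This reduces the Hochster sum to $\abetti(\widetilde{K_{s,s}})=\sum_{k\ge0}\binom{s}{k}\max(k-1,0)+(\text{contribution of }k=0,1)$, i.e. essentially $\sum_{k=0}^{s}\binom{s}{k}(k-1)$ with the convention that the $k=0$ term contributes $+1$ (the empty subgraph) and $k=1$ contributes $0$.

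Wait — that would give exactly $\abetti(K_s)$'s formula, not the stated answer, so the homotopy claim above must be wrong: the vertices outside the forbidden pairs are \emph{not} cone points, since e.g. $a_j$ for $j\notin A\cup B$ is adjacent to all of $B$ but $b_{j'}$ for $j'\in B\setminus A$ need not be adjacent to $a_j$... actually it is. Let me re-examine: if $j\notin A$ but the vertex $b_j\in B$, then in $\widetilde{K_{s,s}}$ the vertex $b_j$ is adjacent to every $a_i$ with $i\ne j$, and $j\notin A$ so $b_j$ is adjacent to all of $A$ — hence $b_j$ is a cone point of $\ind$ and contributes $\tbetti=0$. So the nonzero contributions come exactly from subgraphs where every $b$-vertex is matched, i.e. $B\subseteq A$, and symmetrically $A\subseteq B$, forcing $A=B$ of size $k$; that subgraph is $\widetilde{K_{k,k}}$ and $\tbetti(\widetilde{K_{k,k}})$ is whatever it is, call it $c_k$. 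Thus $\abetti(\widetilde{K_{s,s}})=\sum_{k=0}^{s}\binom{s}{k}c_k$ where $c_0=1$, $c_1=0$ ($\ind(\widetilde{K_{1,1}})$ is a point), and for $k\ge2$, $c_k=\abetti(\widetilde{K_{k,k}})-\sum_{j<k}\binom{k}{j}c_j$ is a circular definition, so instead I must compute $c_k=\tbetti(\widetilde{K_{k,k}})$ independently. The honest final step: show $\ind(\widetilde{K_{k,k}})\simeq S^{k-2}$ or a wedge thereof — in fact it is well known that $\ind(\widetilde{K_{k,k}})$ is homotopy equivalent to $S^{k-2}$ when... hmm, actually $\ind(\widetilde{K_{2,2}})$: $\widetilde{K_{2,2}}$ is two disjoint edges, so $\ind$ is $S^0*S^0=S^1$, $\tbetti=1$. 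And $\widetilde{K_{3,3}}$ is the $6$-cycle, $\ind(C_6)\simeq S^1\vee S^1$, $\tbetti=2$. In general $\widetilde{K_{k,k}}$ has $\ind$ homotopy equivalent to $S^{k-2}$... no, the $6$-cycle gives $k=3$ but $S^{3-2}=S^1$ appearing twice. The pattern $c_k$: $c_0=1,c_1=0,c_2=1,c_3=2,\ldots$ — then $\sum_k\binom{s}{k}c_k$ must be engineered to equal $4^{s-1}(s-4)+2\cdot3^s-2^{s+1}+2$, which via the binomial transform means $c_k$ should be the forward difference combination; decomposing $4^{s-1}(s-4)+2\cdot3^s-2^{s+1}+2=\sum_k\binom{s}{k}c_k$ and matching against $\sum_k\binom sk 3^k(\cdot)$ etc., one finds $c_k$ has the form $a\cdot3^{k}+\ldots$; I would pin down $\tbetti(\widetilde{K_{k,k}})$ by the deletion recursion $\tbetti(G)\le\tbetti(G\setminus v)+\tbetti(G\setminus N[v])$ — here $G\setminus N_G[a_1]=\{b_1\}\sqcup\widetilde{K_{k-1,k-1}}$ has $\tbetti=0$ (isolated $b_1$), and $G\setminus a_1$ is $\widetilde{K_{k,k}}$ minus one vertex whose $\ind$ one analyzes similarly — giving a clean recursion $c_k$ in terms of $c_{k-1}$, with the correct homotopy type being $\ind(\widetilde{K_{k,k}})\simeq\bigvee_{k-1}S^{k-2}$ so $c_k=k-1$ for $k\ge1$. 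But then $\sum_{k=0}^s\binom sk c_k=1+\sum_{k=1}^s\binom sk(k-1)=1+(s2^{s-1}-2^s+1)=\abetti(K_s)$, \emph{not} the claimed value — contradiction, so $\tbetti(\widetilde{K_{k,k}})$ is genuinely larger, and the real content of the lemma is a correct computation of $\ind(\widetilde{K_{k,k}})$, which is $\simeq S^{k-2}$ with $\tbetti=1$... which is even smaller. The resolution must be that non-square subgraphs \emph{do} contribute: I was wrong that every unmatched $b_j$ is a cone point, because $b_j$ is adjacent to $a_j$ only if $j\in A$, and if $j\notin A$ then yes $b_j$ is adjacent to all of $A$ — so it IS a cone point. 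Unless $A$ is empty.

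Given this genuine subtlety, here is the honest plan. \textbf{Step 1:} By \eqref{eq:hoch}, $\abetti(\widetilde{K_{s,s}})=\sum_{W}\tbetti(\widetilde{K_{s,s}}[W])$; parametrize $W$ by $(p,q,k)$ as above with multiplicity $\binom sk\binom{s-k}{p-k}\binom{s-k}{q-k}$. \textbf{Step 2:} Prove that $\tbetti(\widetilde{K_{p,q}}\text{ with }k\text{ missing edges})=0$ unless $p=q$, and in that case compute it via the link–deletion sequence, obtaining a recursion that identifies the homotopy type as a wedge of spheres; call the resulting value $c(q,k)$ (it can be nonzero with $k<q$ only through the empty-part degenerate cases $p=0$ or $q=0$, which contribute $c(0,0)=1$). \textbf{Step 3:} Substitute, interchange the order of summation to get $\abetti(\widetilde{K_{s,s}})=\sum_{p,q,k}\binom sk\binom{s-k}{p-k}\binom{s-k}{q-k}c(q,k)$, and evaluate the resulting binomial sums in closed form — this is where the three exponential terms $4^{s-1}s$, $3^s$, $2^{s+1}$ arise, from sums of the shapes $\sum\binom sk(s-k)2^{2(s-k)}$, $\sum\binom sk 2^{s-k}$ etc. \textbf{Step 4:} Collect terms and match with $4^{s-1}(s-4)+2\cdot3^s-2^{s+1}+2$; verify small cases $s=1,2,3$ as a check. \textbf{Main obstacle:} Step 2 — getting the homotopy type (or at least $\tbetti$) of the independence complex of $\widetilde{K_{s,s}}$ and its induced subgraphs exactly right, since the naive ``cone point'' argument is delicate and the true answer is not simply $s-1$; I would handle it by a clean induction on $s$ using $\ind(\widetilde{K_{s,s}}\setminus a_1)$ and $\ind(\widetilde{K_{s,s}}\setminus N[a_1])$, analogous to the proof of Lemma~\ref{lem:betti}.
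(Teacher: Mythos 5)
There is a genuine gap: your overall strategy (Hochster's formula plus a classification of the homotopy types of the induced subcomplexes) is exactly the paper's, but the classification you finally settle on is wrong, and Step~2 of your plan enshrines the error. The culprit is the cone-point argument: for $j\in B\setminus A$ the vertex $b_j$ is adjacent to \emph{all} of $A$ in the induced graph, and such a vertex is \emph{not} a cone point of the independence complex --- a cone point corresponds to an \emph{isolated} vertex of the graph, one adjacent to nothing (Lemma~\ref{lem:betti}.b). In fact $\ind(\widetilde{K_{s,s}}[W])$, with $W$ determined by index sets $A,B$ as in your notation, is just the simplex on the $a$-vertices of $W$ and the simplex on the $b$-vertices of $W$, joined by the edges $\{a_i,b_i\}$ for $i\in A\cap B$ (no larger mixed faces exist, since an independent set cannot contain two $a$'s and a $b$). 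Hence $\tbetti=1$ for $W=\emptyset$; $\tbetti=0$ if $W\neq\emptyset$ lies in one part; $\tbetti=1$ (the type of $S^0$) if $W$ meets both parts but $A\cap B=\emptyset$; and $\tbetti=q-1$ (a wedge of $q-1$ circles) if $|A\cap B|=q\geq 1$. So both your interim conclusion that only $A=B$ contributes and your Step~2 claim that $\tbetti=0$ unless $p=q$ are false: for example $A=\{1,2\}$, $B=\{3\}$ induces the graph with edges $a_1b_3$, $a_2b_3$, whose independence complex is the edge $\{a_1,a_2\}$ plus the isolated point $b_3$, i.e.\ $S^0$, so $\tbetti=1$ with $p\neq q$; and $A=\{1,2,3\}$, $B=\{1,2\}$ gives two simplices joined by two segments, a circle, again $\tbetti=1$.

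You actually detected the symptom --- restricting to $A=B$ reproduces an $\abetti(K_s)$-type sum that cannot match the claimed value --- but the resolution is the classification above, not a recomputation of $\tbetti(\widetilde{K_{k,k}})$ (your value $k-1$ there is correct: two $(k-1)$-simplices joined by $k$ segments form a wedge of $k-1$ circles, not an $S^{k-2}$). With the correct classification the count is immediate, and it is the paper's proof: there are $3^s-2^{s+1}+1$ subsets meeting both parts with no matched pair, and $\binom{s}{q}3^{s-q}$ subsets containing exactly $q$ matched pairs, so $\abetti(\widetilde{K_{s,s}})=1+(3^s-2^{s+1}+1)+\sum_{q=1}^{s}\binom{s}{q}3^{s-q}(q-1)$; since $\sum_{q}\binom{s}{q}3^{s-q}q=s\,4^{s-1}$ and $\sum_{q\geq 1}\binom{s}{q}3^{s-q}=4^s-3^s$, the sum equals $4^{s-1}(s-4)+3^s$ and the total is the stated $4^{s-1}(s-4)+2\cdot 3^s-2^{s+1}+2$.
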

\begin{proof}
The complex $\ind(\widetilde{K_{s,s}})$ consists of two $(s-1)$-simplices connected by $s$ line segments (the edges of $M$). An induced subcomplex $\ind(G[W])$ has the homotopy type of
\begin{itemize}
\item $\emptyset$, if $W=\emptyset$,
\item a point, if $W\neq\emptyset$ is contained in one part of $V(\widetilde{K_{s,s}})$,
\item $S^0$, if $W$ contains at least one vertex in each part, but contains no edge of $M$,
\item the wedge $\bigvee^{q-1}S^1$, if $W$ contains exactly $q$ edges of $M$, $q=1,\ldots,s$.
\end{itemize} 
Counting these types of subgraphs yields, via \eqref{eq:hoch}
$$\abetti(\widetilde{K_{s,s}})=1+(3^s-2^{s+1}+1)+\sum_{q=1}^s{s \choose q}3^{s-q}(q-1).$$
Calculating this sum is an easy exercise.
\end{proof}

\begin{example}
\label{ex:bigAlgBetti}
Suppose $n$ is divisible by $s$ and let $G_n=\bigsqcup^{n/s}K_s$. By \eqref{eq:abettiprod} and Lemma \ref{lem:abetticlique} we have
$$\abetti(G_n)=\abetti(K_s)^{n/s}=((2^{s-1}(s-2)+2)^{1/s})^n.$$
One checks that $(2^{s-1}(s-2)+2)^{1/s}$ is maximized for $s=9$, so when $n$ is divisible by $9$ we have
$$\abetti(G_n)=((1794)^{1/9})^n\approx 2.299^n.$$
\end{example}

\begin{example}
\label{ex:bigAlgiTriangleFree}
Suppose $n$ is divisible by $2s$ and take $G_n=\bigsqcup^{n/2s}\widetilde{K_{s,s}}$. By \eqref{eq:abettiprod} and Lemma \ref{lem:abettibip} we have
$$\abetti(G_n)=((4^{s-1}(s-4)+2\cdot 3^s-2^{s+1}+2)^{1/2s})^n.$$
The maximal value of this expression is attained when $s=18$. For $n$ divisible by $36$ we thus have a bipartite graph $G_n$ with $$\abetti(G_n)\approx 2.070^n.$$
\end{example}

\medskip
Next comes the example which we believe to be extremal among complexes with no missing $d$-faces (see Section~\ref{section:nomissing}).
\begin{example}
\label{ex:missingfaces}
Suppose $d\geq 2$ and let $n$ be divisible by $2d+1$. Define
$$K(n,d)=\underbrace{\Delta[2d]^{(d-2)}\ast\cdots\ast\Delta[2d]^{(d-2)}}_{n/(2d+1)}$$
where $\Delta[k]^{(s)}$ is the $s$-dimensional skeleton of the $k$-dimensional simplex. Clearly $\Delta[2d]^{(d-2)}$ is in $\nicef_d$ and so is $K(n,d)$ as the class $\nicef_d$ is closed under joins. Since $\Delta[k]^{(s)}$ is up to homotopy a wedge of ${k\choose s+1}$ spheres, we get
$$\betti(K(n,d))=\Bigg({2d\choose d-1}^{\frac{1}{2d+1}}\Bigg)^n.$$
For $d=2$ we get the same complex we had in Example \ref{ex:bigBetti}. More precisely, if $n$ is divisible by $5$ then $K(n,2)=\ind(\bigsqcup^{n/5}K_5)$.
\end{example}

\medskip
Finally we have an example of a graph whose neighbourhood complex has the total Betti number exponential in the number of vertices.

\begin{example}
\label{ex:neibcomplex}
Suppose $n$ is divisible by $4$ and let $G_n=\bigoplus^{n/4}(2K_2)$. The complex $\neibcomplex(2K_2)$ consists of $4$ isolated points, hence $\betti(\neibcomplex(2K_2))=3$. By Lemma~\ref{lem:neibjoin} we have
$$\betti(\neibcomplex(G_n))=3^{n/4}=(3^{1/4})^n\approx 1.316^n.$$
\end{example}

\section{Final remarks}
\label{section:final}
The methods behind Theorem~\ref{thm:maintheorem3} can be used in a variety of situations. As another example, consider the dominance complex $D(G)$ of a graph $G$, whose faces are the complements of dominating sets of $G$, see for instance \cite{Kaw3,MarTes2}. Then then minimal non-faces of $D(G)$ are of the form $N_G[v]$ for $v\in V(G)$, so Theorem~\ref{thm:maintheorem3}.c) applies with $m=n$.

For this reason it would be interesting to find the optimal value of $\Gamma$ in Theorem~\ref{thm:maintheorem1}, since its improvement leads automatically to improvements in Theorem~\ref{thm:maintheorem3}. Moreover, the result of Theorem~\ref{thm:maintheorem3}.b) about the complex $\neibcomplex(G)$ can possibly be improved even further, as the bipartite graph $\bipa(\neibcomplex(G))$ appearing in the proof is of a rather special form.

For the intermediate classes of complexes with no missing $d$-faces we have the following conjecture.
\begin{conjecture}
\label{conj:nomissingbest}
For every simplicial complex $K\in\nicef(n,d)$ we have $\betti(K)\leq \big({2d\choose d-1}^{\frac{1}{2d+1}}\big)^n$.
\end{conjecture}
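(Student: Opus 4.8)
The plan is to argue by strong induction on $n$, in the same spirit as the proofs of Theorems~\ref{thm:maintheorem1} and~\ref{thm:missingfacesbound}. Write $\gamma_d:=\binom{2d}{d-1}^{1/(2d+1)}$. By Alexander duality, which gives $\betti(K)=\betti(K^*)$ and exchanges $\nicef(n,d)$ with $\nicem(n,d)$, it suffices to prove $\betti(K)\le\gamma_d^{\,n}$ for $K\in\nicem(n,d)$, and the small cases are a finite check. For the inductive step one keeps the setup of Theorem~\ref{thm:missingfacesbound}: if $K$ has a cone point then $\betti(K)=0$; otherwise fix a nonempty maximal face $F$, enumerate $V(K)\setminus F=\{v_1,\dots,v_s\}$ with $s\le d$, set $K_i=K\setminus\{v_1,\dots,v_i\}$, and iterate \eqref{eq:bettiinequality} to obtain
$$\betti(K)\le\sum_{i=0}^{s-1}\betti(\lk_{K_i}v_{i+1}),$$
the remaining term $\betti(K\setminus\{v_1,\dots,v_s\})=\betti(F)$ being $0$. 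By Lemma~\ref{lem:mclosed} each $\lk_{K_i}v_{i+1}$ again lies in $\nicem_d$ and has at most $n-i-1$ vertices.

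As the proof of Theorem~\ref{thm:missingfacesbound} shows, feeding the estimate $|V(\lk_{K_i}v_{i+1})|\le n-i-1$ directly into the induction only delivers the larger constant $\theta_d$. To reach $\gamma_d$ one must capture the fact that in the conjectured extremal complex $K(n,d)$ of Example~\ref{ex:missingfaces} the recursion tree is extremely sparse: almost every branch reaches a contractible subcomplex after a bounded number of further steps. This suggests two refinements. First, rather than stopping after one level, one should continue to expand every $\lk_{K_i}v_{i+1}$ by the same rule, so that the quantity to be bounded is the number of contractible leaves of the full recursion tree, the target becoming the statement that this number is at most $\gamma_d^{\,n}$ when one starts from $n$ vertices. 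Second — and this is the crux — one needs structural control over the \emph{choices} made along the tree (which maximal face $F$, in which order to peel its complement, or even which vertex to delete) ensuring that vertices are consumed fast enough on average: for instance a vertex potential $\phi\colon V(K)\to\er_{>0}$ whose total bounds the number of leaves, or a dichotomy to the effect that if no choice consumes many vertices quickly then $K$ is (a join involving) the $(d-2)$-skeleton $\Delta[2d]^{(d-2)}$, at which point multiplicativity of $\betti$ under joins (Lemma~\ref{lem:betti}.a) together with \eqref{eq:joinind}) reduces the problem to the single block on $2d+1$ vertices.

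The main obstacle is exactly this structural input. For $d=2$ it is supplied by the minimum-degree argument in the proof of Theorem~\ref{thm:maintheorem1}: a vertex of degree $\delta$ yields $\delta$ branches, each losing $\delta+1$ vertices, and $\delta\cdot\Theta^{-(\delta+1)}\le1$ with equality at $\delta=4$, which is precisely how $\Theta=4^{1/5}$ and the extremal graph $\bigsqcup K_5$ arise. For $d\ge3$ there seems to be no single quantity playing the role of ``minimum degree'' for a complex without missing $d$-faces — equivalently, for the $d$-uniform hypergraph of its minimal non-faces — that makes the branching factor and the per-branch vertex loss balance out to $\gamma_d$, and this is why the conjecture is still open. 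A successful argument will presumably have to analyse the recursion globally, amortizing the vertex budget over many steps rather than greedily; the difficulty is of a piece with the fact, noted in the remark following Theorem~\ref{thm:missingfacesbound}, that even the optimal number of maximal faces of a complex in $\nicef(n,d)$ is not understood for $d\ge3$.
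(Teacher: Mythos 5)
You have not proved this statement, and neither does the paper: this is Conjecture~\ref{conj:nomissingbest}, which the paper explicitly leaves open for $d\geq 3$ (for $d=2$ it coincides with the first bound of Theorem~\ref{thm:maintheorem1}, which is proved). Your text is a strategy outline rather than a proof, and you say so yourself: the entire load-bearing step --- the ``structural control over the choices made along the tree,'' be it a vertex potential $\phi$ or the dichotomy ``either vertices are consumed quickly or $K$ is a join involving $\Delta[2d]^{(d-2)}$'' --- is postulated, not supplied. Without it, the only estimate you actually have at each node of the recursion is $|V(\lk_{K_i}v_{i+1})|\leq n-i-1$ from Lemma~\ref{lem:mclosed}, and feeding that into the induction reproduces exactly the proof of Theorem~\ref{thm:missingfacesbound}, giving the constant $\theta_d$, which strictly exceeds ${2d\choose d-1}^{1/(2d+1)}$ for every $d\geq 2$ (already $\theta_2\approx 1.61$ versus $\Theta\approx 1.32$). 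You correctly identify why the $d=2$ argument does not generalize --- the minimum-degree parameter of the graph of minimal non-faces has no obvious analogue for the $d$-uniform hypergraph of minimal non-faces that balances branching factor against vertex loss at the value $\gamma_d$ --- but identifying the obstacle is not the same as overcoming it.

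Two further points in the sketch would need repair even as a framework. First, the quantity controlled by fully expanding the recursion is not the number of \emph{contractible} leaves (those contribute $0$ to the bound); in the flag-complex proof the contributing leaves are copies of the empty complex, with $\betti_{-1}=1$, so you would need to bound the number of non-acyclic leaves, weighted by their total Betti numbers, which for intermediate stopping rules need not be $\leq 1$. Second, the proposed reduction ``if the recursion is dense then $K$ splits as a join with a block on $2d+1$ vertices'' has no argument behind it: closedness of $\nicem_d$ under links and deletions gives no join decomposition, and multiplicativity under joins only helps once such a decomposition is established. In short, the proposal is a reasonable research plan consistent with the paper's discussion of why the problem is hard, but it contains no proof of the conjecture.
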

When $d=2$ it is the statement of Theorem~\ref{thm:maintheorem1} and it is open for $d\geq 3$.

\medskip
Finally, motivated by the situation in Example~\ref{ex:bigBettiBip}, we make the following conjecture.
\begin{conjecture}
\label{conj:biptri}
If $G$ maximizes $\tbetti(G)$ among $n$-vertex  triangle-free graphs then $G$ is bipartite.
\end{conjecture}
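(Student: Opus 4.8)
The plan is to induct on $n$, first reducing to connected graphs and then attempting to straighten out a shortest odd cycle by a local surgery. Put $M(n)=\max\{\tbetti(G):G\text{ triangle-free},\ |V(G)|=n\}$; then $M(1)=0$, so the claim is vacuous for $n=1$, while $M(n)\ge 1$ for $n\ge 2$ (disjoint unions of edges and triangles). By Lemma~\ref{lem:betti}.a) the function $M$ is supermultiplicative, so if an extremal graph $G$ on $n\ge 2$ vertices is disconnected, $G=G_1\sqcup G_2$ with $n_i=|V(G_i)|$, then $M(n)=\tbetti(G_1)\tbetti(G_2)\le M(n_1)M(n_2)\le M(n)$ forces each $G_i$ to be extremal on $n_i$ vertices; if some $n_i=1$ then $\tbetti(G_i)=0$, hence $\tbetti(G)=0$, contradicting $M(n)\ge 1$, so $n_i\ge 2$ and the inductive hypothesis makes both $G_i$, and therefore $G$, bipartite. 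Thus we may assume $G$ is connected; the homotopy equivalence $\ind(G)\simeq\susp\,\ind(G\setminus N_G[u])$ coming from the cofibre sequence $\lk_{\ind(G)}u\to\ind(G)\setminus u\to\ind(G)$ when $u$ is the neighbour of a degree-one vertex (so that $\ind(G)\setminus u$ is a contractible cone) further suggests one may assume $\mindeg(G)\ge 2$, presumably $\mindeg(G)=3$ as in the proof of Theorem~\ref{thm:maintheorem1}.

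Assume now, for contradiction, that $G$ is connected, triangle-free, non-bipartite and extremal, and let $C=v_0v_1\cdots v_{2k}v_0$ be a shortest odd cycle; since $G$ is triangle-free, $2k+1\ge 5$, and $C$ is induced, because any chord would split $C$ into a triangle or into a shorter odd cycle. I would try to delete one edge $e=uv$ of $G$ so as not to decrease $\tbetti$ while strictly decreasing the minimum number of edge deletions needed to make $G$ bipartite; iterating such moves would reach a bipartite graph with total Betti number at least $\tbetti(G)$. The effect of deleting $e=uv$ is governed by the decomposition into subcomplexes
\begin{equation*}
\ind(G\setminus e)=\ind(G)\ \cup\ \big(\{u,v\}\ast\ind(H)\big),\qquad H=G\setminus\big(N_G[u]\cup N_G[v]\big),
\end{equation*}
whose two pieces intersect in $S^0\ast\ind(H)\simeq\susp\,\ind(H)$; since $\{u,v\}\ast\ind(H)$ is contractible, the Mayer--Vietoris sequence gives
\begin{equation*}
\tbetti(G)-\tbetti(H)\ \le\ \tbetti(G\setminus e)\ \le\ \tbetti(G)+\tbetti(H).
\end{equation*}
In particular, if $e$ can be picked so that $\tbetti(H)=0$ --- equivalently so that $H$ has an isolated vertex, by Lemma~\ref{lem:betti}.b) --- then $\ind(G\setminus e)\simeq\ind(G)$ and $G\setminus e$ is again triangle-free, with one fewer edge.

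The main obstacle is that this scheme need not close. Demanding an edge $uv$ together with a vertex $w\notin N_G[u]\cup N_G[v]$ satisfying $N_G(w)\subseteq N_G[u]\cup N_G[v]$ (so that $H$ has the isolated vertex $w$) is a strong requirement that a non-bipartite triangle-free graph may simply fail to meet; and even when such edges exist, iteratively deleting them can terminate at a non-bipartite graph rather than at a bipartite one. Making the argument work seems to require either a genuinely cleverer local move --- for instance deleting one edge of an odd cycle while adding another, chosen to preserve triangle-freeness and to make the two Mayer--Vietoris error terms cancel --- or a global comparison $\tbetti(G)\le\tbetti(G')$ with $G'$ a canonical bipartite graph on the same number of vertices, perhaps manufactured through the complex $K$ with $\ind(\bipa(K))\simeq\susp\,K$ of Lemma~\ref{fact:bipsim}. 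I expect the connected $3$-regular case --- which is where the extremal constructions of Section~\ref{section:lower} live --- to already contain the whole difficulty; moreover the sharper form, that \emph{every} maximiser is bipartite, demands that all these inequalities be strict off the bipartite locus, which rules out the softest versions of the plan.
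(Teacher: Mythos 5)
You have not proved the statement, and neither does the paper: this is Conjecture~\ref{conj:biptri}, which the paper leaves open, so there is no proof to compare against and the only question is whether your argument closes the problem. It does not, as you yourself acknowledge. The parts that are sound are the soft ones: the reduction to connected maximizers via the multiplicativity of Lemma~\ref{lem:betti}.a) (modulo the slip that ``disjoint unions of edges and triangles'' is not a family of triangle-free graphs --- for odd $n$ use an edge together with a $P_3$ or a $C_5$ to see $M(n)\geq 1$), and the Mayer--Vietoris analysis of an edge deletion, $\ind(G\setminus e)=\ind(G)\cup\big(\{u,v\}\ast\ind(H)\big)$ with intersection $\susp\,\ind(H)$, which is correct and does give $|\tbetti(G\setminus e)-\tbetti(G)|\leq\tbetti(H)$.

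The genuine gaps are exactly at the load-bearing step. First, the existence of an edge $uv$ and a vertex $w\notin N_G[u]\cup N_G[v]$ with $N_G(w)\subseteq N_G[u]\cup N_G[v]$ is simply not available in general: already in an induced $C_7$ every choice of edge leaves $H$ a path on three vertices with $\tbetti(H)=1$, so the ``free'' deletion move does not exist, and nothing in your setup replaces it. Second, even where such moves exist, you give no invariant forcing the process to terminate at a bipartite graph rather than at some non-bipartite graph admitting no further moves; the intended decrease of the odd-cycle edge-deletion number is asserted, not arranged. Third, the conjecture asserts that \emph{every} maximizer is bipartite, so a chain of weak inequalities $\tbetti(G)\leq\tbetti(G')$ cannot suffice: you would need strict increase (or a separate rigidity argument) whenever $G$ is non-bipartite, and the same objection applies to your minimum-degree reduction, which also only yields a non-strict comparison. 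As it stands the proposal is a reasonable plan of attack with the central lemma missing, and the statement remains open.
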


\subsection*{Acknowledgement} Thanks to Marek Kr\v{c}\'al for asking about the possible size of homology of the neighbourhood complex and to Eran Nevo for discussions.


\end{document}